\newtheorem{theorem}{Theorem}[section]
\newtheorem{lem}[theorem]{Lemma}
\newtheorem{coro}[theorem]{Corollary}
\newtheorem{prop}[theorem]{Proposition}
\newcommand{\exit}{{\mbox{\, \vspace{3mm}}} \hfill\mbox{$\square$}}
\numberwithin{equation}{section}
\title{Parisian excursion below a fixed level from the last record maximum of L\'evy insurance\\ risk process}
\author{B.A. Surya\footnote{Email address: budhi.surya@vuw.ac.nz; Postal address: School of Mathematics and Statistics, Victoria University of Wellington, Gate 6 Kelburn PDE, Wellington 6140, New Zealand.}\\ Victoria University of Wellington \\ School of Mathematics and Statistics \\ Wellington, New Zealand }
\date{15 February 2018}
\begin{document}
\maketitle \pagestyle{myheadings} \markboth{B.A. Surya} {Parisian excursion from the last record maximum of L\'evy process}
\begin{abstract}
This paper presents some new results on Parisian ruin under L\'evy insurance risk process, where ruin occurs when the process has gone below a fixed level from the last record maximum, also known as the high-water mark or drawdown, for a fixed consecutive periods of time. The law of ruin-time and the position at ruin is given in terms of their joint Laplace transforms. Identities are presented semi-explicitly in terms of the scale function and the law of the L\'evy process. They are established using recent developments on fluctuation theory of drawdown of spectrally negative L\'evy process. In contrast to the Parisian ruin of L\'evy process below a fixed level, ruin under drawdown occurs in finite time with probability one. 

\medskip

\textit{AMS 2000 subject classifications}. 60G40, 35R35, 60J65, 60G25, 45G10.\\
\indent \textbf{Keywords}: Parisian ruin, Levy process, drawdown, first-passage problem of drawdown
\end{abstract}

\section{Introduction}
\label{sec:1}
Let $X=\{X_t:t\geq 0\}$ be a spectrally negative L\'evy process defined on filtered probability space $(\Omega, \mathcal{F}, \{\mathcal{F}_t:t\geq 0\},\mathbb{P})$, where $\mathcal{F}_t$ is the natural filtration of $X$ satisfying the usual assumptions of right-continuity and completeness. We denote by $\{\mathbb{P}_x, x\in\mathbb{R}\}$ the family of probability measure corresponding to a translation of $X$ s.t. $X_0=x$, with $\mathbb{P}=\mathbb{P}_0$, and define $\overline{X}_t=\sup_{0\leq s\leq t} X_s$ the running maximum of $X$ up to time $t$. The L\'evy-It\^o sample paths decomposition of the L\'evy process is given by
\begin{align}\label{eq:LevyIto}
X_t=\mu t + \sigma B_t &+\int_0^t\int\limits_{\{x<-1\}} x\nu(dx,ds) + \int_0^t\int\limits_{\{-1\leq x<0\}} x\big(\nu(dx,ds)-\Pi(dx)ds\big),
\end{align}
where $\mu\in\mathbb{R}$, $\sigma\geq0$ and $(B_t)_{t\geq0}$ is standard Brownian motion, whilst $\nu(dx,dt)$ denotes the Poisson random measure associated with the jumps process $\Delta X_t:=X_t-X_{t-}$ of $X$. This Poisson random measure has compensator given by $\Pi(dx)dt$, where $\Pi$ is the L\'evy measure satisfying the integrability condition:
\begin{equation}\label{eq:integrability}
\int_{-\infty}^0 (1\wedge x^2)\Pi(dx)<\infty.
\end{equation}
We refer to Chapter 2 of \cite{Kyprianou} for more details on paths decomposition of $X$.

Due to the absence of positive jumps, it is therefore sensible to define
\begin{equation}\label{eq:exponent}
\psi(\lambda)=\frac{1}{t}\log\mathbb{E}\big\{e^{\lambda
X_{t}}\big\}=\mu\lambda
+\frac{1}{2}\sigma^{2}\lambda^{2}+\int_{(-\infty,0)}\big(e^{\lambda
x}-1-\lambda x\mathbf{1}_{\{x>-1\}}\big)\Pi(dx),
\end{equation}
which is analytic on ($\mathfrak{Im}(\lambda)\leq 0$). It is easily
shown that $\psi$ is zero at the origin, tends to infinity at
infinity and is strictly convex. We denote by
$\Phi:[0,\infty)\rightarrow [0,\infty)$ the right continuous inverse
of the Laplace exponent $\psi(\lambda)$, so that
\begin{equation*}
\Phi(\theta)=\sup\{p>0:\psi(p)=\theta\} \quad \textrm{and} \quad
\psi(\Phi(\lambda))=\lambda \quad \text{for all} \quad \lambda \geq
0.
\end{equation*}

It is worth mentioning that under the Esscher transform $\mathbb{P}^{\nu}$ defined by
\begin{equation}\label{eq:esscher}
\frac{d\mathbb{P}^{\nu}}{d\mathbb{P}}\Big\vert_{\mathcal{F}_t}=e^{\nu
X_t -\psi(\nu) t} \quad \textrm{for all $\nu\geq 0$,}
\end{equation}
the L\'evy process $(X,\mathbb{P}^{\nu})$ is still a spectrally
negative L\'evy process. The Laplace exponent of $X$ under the new
measure $\mathbb{P}^{\nu}$ has changed to $\psi_{\nu}(\lambda)$
given by
\begin{equation}\label{eq:newLExp}
\begin{split}
\psi_{\nu}(\lambda) \;=&\; \psi(\lambda+\nu)-\psi(\nu), \quad \textrm{for $\lambda\geq -\nu$}. 
\end{split}
\end{equation}
Subsequently, we define by $\Phi_{\nu}(\theta)$ the largest root of equation
$\psi_{\nu}(\lambda)=\theta$ satisfying
\begin{equation*}
\Phi_{\nu}(\theta)=\Phi(\theta +\psi(\nu))-\nu.
\end{equation*}

Furthermore, assume that from some reference point of time in the past $X$ has achieved maximum $y>0$. Define drawdown process $Y=\{Y_t:t\geq 0\}$ of $X$ by
\begin{equation}\label{eq:drawdown}
Y_t=\overline{X}_t \vee y -X_t,
\end{equation}
under measure $\mathbb{P}_{y,x}$. Notice that we altered slightly our notation for the probability measure $\mathbb{P}_{y,x}$ to denote the law of $X$ under which at time zero $X$ has current maximum $y\geq x$ and position $x\in\mathbb{R}$. We simply write $\mathbb{P}_{\vert y}:=\mathbb{P}_{y,0}$ the law of $Y$ under which $Y_0=y$, and use the notation $\mathbb{E}_x$, $\mathbb{E}_{y,x}$ and $\mathbb{E}_{\vert y}$ to define the corresponding expectation operator to the above probability measures. Subsequently, we denote by $\mathbb{E}_{y,x}^{\nu}$ the expectation under $\mathbb{P}_{y,x}^{\nu}$ by which the L\'evy process $X$ has the Laplace exponent $\psi_{\nu}(\lambda)$ (\ref{eq:newLExp}). Recall that since $X$ is a L\'evy process, it follows that $Y$ is strong Markov. 

In recent developments, some results regarding excursion below a (fixed) default level, say zero, of the L\'evy process $X$ with fixed duration (Parisian ruin) have been obtained and applied in finance and insurance (e.g. option pricing, corporate finance, optimal dividend, etc). We refer among others to Chesney et al. \cite{Chesney}, Francois and Morellec \cite{Francois}, Broadie et al. \cite{Broadie}, Dassios and Wu \cite{Dassios2010}, Loeffen et al. (\cite{Loeffen} \& \cite{Loeffen2017}), Czarna and Palmowski \cite{Czarna} and Landriault et al. \cite{Landriault} and the literature therein for further discussions. In these papers, the excursion takes effect from the first time $T_0^-=\inf\{t>0: X_t<0\}$ the process $X$ has gone below zero under measure $\mathbb{P}_x$, and default is announced at the first time $\tau_r=\inf\big\{t>r: \big(t- \sup\{s<t: X_s>0\}\big)>r\big\}$ the L\'evy process has gone below zero for $r>0$ consecutive periods of time. 

In the past decades attention has been paid to find risk protection mechanism against certain financial assets' outperformance over their last record maximum, also referred to as high-water mark or drawdown, which in practice may affect towards fund managers' compensation. See, among others, Agarwal et al. \cite{Agarwal} and Goetzmann et al. \cite{Goetzmann} for details. Such risk may be protected against using an insurance contract. In their recent works, Zhang et al. \cite{Zhang}, Palmowski and Tumilewicz \cite{Palmowski2018} discussed fair valuation and design of such insurance contract. 

Motivated by the above works, we consider a Parisian ruin problem, where ruin occurs when the L\'evy risk process $X$ has gone below a fixed level $a>0$ from its last record maximum (running maximum) $\overline{X}_t\vee y$ for a fixed consecutive periods of time $r\geq0$. This excursion takes effects from the first time $\tau_a^+=\inf\{t>0: \overline{X}_t\vee y - a > X_t\}$ the process under $\mathbb{P}_{y,x}$ has gone below a fixed level $a>0$ from the last record maximum $\overline{X}_t\vee y$. Equivalently, this stopping time can be written in terms of the first passage above level $a>0$ of the drawdown process $Y$ as $\tau_a^+=\inf\{t>0: Y_t > a\}$. Ruin is declared at the first time the process $Y$ has undertaken an excursion above level $a$ for $r$ consecutive periods of time before getting down again below $a$, i.e.,
\begin{equation}\label{eq:ruin-time}
\tau_r=\inf\{t>r: (t-g_t)\geq r \} \quad \textrm{with}\quad g_t=\sup\{0\leq s\leq t: Y_s\leq a\}.
\end{equation}

Working with the stopping time $\tau_r$ (\ref{eq:ruin-time}), we consider the Laplace transforms
\begin{equation}\label{eq:LT}
\mathbb{E}_{y,x}\big\{e^{-u\tau_r}\mathbf{1}_{\{\tau_r<\infty\}}\big\} \quad \textrm{and} \quad
\mathbb{E}_{y,x}\big\{e^{-u\tau_r + \nu X_{\tau_r}}\mathbf{1}_{\{\tau_r<\infty\}}\big\}, 
\end{equation}
for $u,\nu,r\geq 0$ and $y\geq x $. The first quantity gives the law of the ruin time $\tau_r$, whereas the second describes the joint law of the ruin time $\tau_r$ and the position at ruin $X_{\tau_r}$. 
 
The rest of this paper is organized as follows. Section \ref{sec:main} presents the main results of this paper. Some preliminary results are presented in Section \ref{sec:Pre}. Section \ref{sec:proofs} discusses the proofs of the main results. Section \ref{sec:conclusions} concludes this paper.

\section{Main results}\label{sec:main}

The results are expressed in terms of the scale function $W^{(u)}(x)$ of $X$ defined by
\begin{equation}\label{eq:scale}
\int_0^{\infty} e^{-\lambda x} W^{(u)}(x) dx =\frac{1}{\psi(\lambda)-u}, \quad \textrm{for $\lambda>\Phi(u)$,}
\end{equation}
with $W^{(u)}(x)=0$ for $x<0$. We refer to $W_{\nu}^{(u)}$ the scale function under $\mathbb{P}^{\nu}$. Following (\ref{eq:scale}), it is straightforward to check under the new measure $\mathbb{P}^{\nu}$ that
\begin{equation}\label{eq:scale2}
W_{\nu}^{(u)}(x)=e^{-\nu x} W^{(u+\psi(\nu))}(x),
\end{equation}
for all $u$ and $\nu$ such that $u\geq -\psi(\nu)$ and $\psi(\nu)<\infty$. To see this, take Laplace transforms on both sides. We will also use the notation $\overline{W}_{\nu}^{(u)}(x)$ to denote $\int_0^x W_{\nu}^{(u)}(y)dy$.  

It is known following \cite{Chan} that, for any $u\geq 0$, the $u-$scale function $W^{(u)}$ is $C^1(0,\infty)$ if the L\'evy measure $\Pi$ does not have atoms and is $C^2(0,\infty)$ if $\sigma>0$.
For further details on spectrally negative L\'evy process, we refer
to Chapter VI of Bertoin \cite{Bertoin} and Chapter 8 of Kyprianou
\cite{Kyprianou}. Some examples of L\'evy processes for which $W^{(q)}$
are available in explicit form are given by Kuznetzov et al.
\cite{Kuznetzov}. In any case, it can be computed by numerically
inverting (\ref{eq:scale}), see e.g. Surya \cite{Surya}.

In the sequel below, we will use the notation $\Omega_{\epsilon}^{(u)}(x,t)$ defined by
\begin{equation*}
\begin{split}
\Omega_{\epsilon}^{(u)}(x,t)=&\int_{\epsilon}^{\infty} W^{(u)}(z+x-\epsilon)\frac{z}{t}\mathbb{P}\{X_t\in dz\}, \quad \textrm{for $\epsilon\geq 0$},
\end{split}
\end{equation*}
and define its partial derivative w.r.t $x$, $\frac{\partial}{\partial x} \Omega_{\epsilon}^{(u)}(x,t)$, by $\Lambda_{\epsilon}^{(u)}(x,t)$, i.e., 
\begin{equation*}
\begin{split}
\Lambda_{\epsilon}^{(u)}(x,t)=&\int_{\epsilon}^{\infty} W^{(u)\prime}(z+x-\epsilon)\frac{z}{t}\mathbb{P}\{X_t\in dz\}.
\end{split}
\end{equation*}
For convenience, we write $\Omega^{(u)}(x,t)=\Omega_{0}^{(u)}(x,t)$ and $\Lambda^{(u)}(x,t)=\Lambda_{0}^{(u)}(x,t)$.

We denote by $\Omega_{\nu}^{(u)}$ the role of $\Omega^{(u)}$ under change of measure $\mathbb{P}^{\nu}$, i.e.,
\begin{equation}\label{eq:Lambdaqnew}
\Omega_{\nu}^{(u)}(x,t):=\int_{0}^{\infty}W_{\nu}^{(u)}(z+x)\frac{z}{t}\mathbb{P}^{\nu}\{X_t\in dz\},
\end{equation}
similarly defined for $\Lambda_{\nu}^{(u)}(x,t).$ Using (\ref{eq:esscher}), we can rewrite $\Omega_{\nu}^{(u)}(x,t)$ as follows
\begin{equation}\label{eq:OmegaCM}
\Omega_{\nu}^{(u)}(x,t)=e^{-\nu x} e^{-\psi(\nu)t} \Omega^{(u+\psi(\nu))}(x,t).
\end{equation}

The main result concerning the Laplace transform (\ref{eq:LT}) is given below.

\begin{theorem}\label{theo:main}
Define $z=y-x$, with $y\geq x$. For $a>0$ and $u,r\geq 0$, the Laplace transform of $\tau_r$ is given by
 \begin{align}
 \mathbb{E}_{y,x}\big\{e^{-u\tau_r}\mathbf{1}_{\{\tau_r<\infty\}}\big\}&=e^{-ur}\Big\{ 1+ u \Big[\overline{W}^{(u)}(a-z) -\frac{\Omega^{(u)}(a-z,r)}{\Lambda^{(u)}(a,r)}W^{(u)}(a) \nonumber\\
&\hspace{-1cm}+ \int_0^r \Big(\Omega^{(u)}(a-z,t)-\frac{\Omega^{(u)}(a-z,r)}{\Lambda^{(u)}(a,r)}\Lambda^{(u)}(a,t)\Big) dt \Big]  \Big\}. \label{eq:main}
 \end{align}
\end{theorem}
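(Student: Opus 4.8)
The plan is to reduce the Parisian drawdown problem to quantities at the first passage time $\tau_a^+=\inf\{t>0:Y_t>a\}$ of the drawdown process, and then exploit the strong Markov property of $Y$ together with the excursion structure of $Y$ away from levels below $a$. Recall that $\tau_r$ measures an excursion of $Y$ above $a$ that lasts $r$ units of time. The key structural observation is the standard Parisian decomposition: either the first excursion of $Y$ above $a$ (started at $\tau_a^+$) already lasts at least $r$ — in which case $\tau_r=\tau_a^++r$ and $X$ does not climb back to its running maximum during $[\tau_a^+,\tau_a^++r]$ — or that excursion ends before time $r$, in which case, by the strong Markov property at the end of the excursion (when $Y$ is back at or below $a$, equivalently $X$ is back at its running maximum or above), the problem renews. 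I would first write
\begin{equation*}
\mathbb{E}_{y,x}\big\{e^{-u\tau_r}\mathbf{1}_{\{\tau_r<\infty\}}\big\}=\mathbb{E}_{y,x}\big\{e^{-u\tau_a^+}\,h(Y_{\tau_a^+})\big\},
\end{equation*}
where $h(\cdot)$ is the value of the analogous Parisian functional started from an overshoot level in $(a,\infty)$, and then handle $h$ by the excursion/renewal argument above.

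Concretely, the next step is to invoke the preliminary results of Section \ref{sec:Pre} — which I expect to supply (i) the joint law of $(\tau_a^+,Y_{\tau_a^+})$, or equivalently of the overshoot of $Y$ above $a$, in terms of $W^{(u)}$ and the law of $X_t$, and (ii) a "Parisian-type" identity for spectrally negative L\'evy processes giving the probability/Laplace transform that an excursion below a level lasts at least $r$. The function $\Omega^{(u)}_\epsilon(x,t)=\int_\epsilon^\infty W^{(u)}(z+x-\epsilon)\frac{z}{t}\mathbb{P}\{X_t\in dz\}$ is exactly the kernel that appears in the Loeffen–Czarna–Palmowski-style Parisian formula (the term $\frac{z}{t}\mathbb{P}\{X_t\in dz\}$ being the Kendall-type density from the ballot/Kendall identity for the spectrally negative process), so the appearance of $\Omega^{(u)}$ and its $x$-derivative $\Lambda^{(u)}$ in the statement strongly signals that the proof routes through these two ingredients. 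I would then translate the events "$X$ stays below its past maximum for the whole window $[\,\cdot\,,\,\cdot+r]$" into first-passage events for $X$ reflected at its supremum and apply the Markov property at the clock-reset times $g_t$.

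After assembling the renewal equation, the computation becomes: the $e^{-ur}$ prefactor is the discount over the successful window; the constant $1$ is the contribution of the "immediate success" scenario; and the bracketed correction $u\big[\overline{W}^{(u)}(a-z)-\frac{\Omega^{(u)}(a-z,r)}{\Lambda^{(u)}(a,r)}W^{(u)}(a)+\int_0^r(\Omega^{(u)}(a-z,t)-\frac{\Omega^{(u)}(a-z,r)}{\Lambda^{(u)}(a,r)}\Lambda^{(u)}(a,t))\,dt\big]$ must emerge from solving the renewal equation for $h$ and then composing with the entrance law at $\tau_a^+$. The ratio $\Omega^{(u)}(a-z,r)/\Lambda^{(u)}(a,r)$ has the flavour of the "right" normalizing constant that makes a certain harmonic-type combination vanish at the relevant boundary (this is how the Loeffen et al. formulas are typically pinned down), so I expect it to be fixed by requiring continuity/consistency at level $a$ in the drawdown variable. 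I would verify the identity degenerates correctly: setting $r=0$ should give $\mathbb{E}_{y,x}\{e^{-u\tau_0}\}$, i.e. the Laplace transform of $\tau_a^+$ itself, and the right-hand side should collapse accordingly (using $\Omega^{(u)}(x,0)$, $\Lambda^{(u)}(x,0)$ limits).

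The main obstacle, I expect, is the handling of the drawdown-specific excursion theory rather than the algebra: unlike the classical Parisian problem where the process renews from a fixed level $0$, here after a failed excursion the drawdown $Y$ returns to a (random) value $\le a$ while $X$ simultaneously sets a new running maximum, so the correct object to iterate is the excursion measure of $Y$ away from $[0,a]$ (or equivalently the reflected process), and one must justify carefully that the strong Markov property applies at these resetting times and that the resulting renewal equation is the right one — in particular that the contribution of excursions of $Y$ is governed by the same $W^{(u)}$-kernels via the fluctuation identities for $Y$ cited in the introduction. Once that renewal structure is in place and the entrance law at $\tau_a^+$ is substituted, the remaining work is the identification of the kernel $\frac{z}{t}\mathbb{P}\{X_t\in dz\}$ via Kendall's identity and a Fubini/Laplace-inversion bookkeeping to arrive at the stated closed form.
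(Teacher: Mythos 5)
Your overall strategy --- decompose at $\tau_a^+$, set up a renewal equation at the times when $Y$ returns below $a$, and evaluate the resulting kernels via the joint law of $(\tau_a^+,Y_{\tau_a^+})$, Kendall's identity, Tonelli and Laplace inversion in $r$ --- is exactly the route the paper takes. However, the two points you flag as the "main obstacle" are resolved by observations you do not supply, and without them the plan does not close. First, you assert that after a failed excursion "the drawdown $Y$ returns to a (random) value $\le a$" and that one must iterate an excursion measure of $Y$ away from $[0,a]$. This is backwards: since $X$ is spectrally negative, $Y_t=\overline{X}_t\vee y-X_t$ has only upward jumps and decreases continuously, so $Y$ creeps down to the level $a$ exactly, i.e.\ $Y_{\tau_a^-}=a$ a.s.\ (this is precisely why the paper can write $\mathbb{E}_{\vert y}\{e^{-\theta\tau_a^-}\}=e^{-\Phi(\theta)(y-a)}$ in (\ref{eq:fptabove})). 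Consequently the renewal point is the \emph{deterministic} level $a$, the renewal relation is a scalar equation for the single unknown $\mathbb{E}_{\vert a}\{e^{-u\tau_r}\mathbf{1}_{\{\tau_r<\infty\}}\}$, and no excursion measure away from an interval is needed. This creeping property is the linchpin of the whole argument, and your proposal leaves it unidentified.

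Second, solving that scalar equation at the renewal level is degenerate when $X$ has unbounded variation: the coefficient of the unknown is $1-e^{-ur}\big(\Omega^{(u)}(0,r)-\frac{W^{(u)}(0)}{W^{(u)\prime}(a)}\Lambda^{(u)}(a,r)\big)$, and since $\Omega^{(u)}(0,r)=e^{ur}$ by (\ref{eq:pers3}) and $W^{(u)}(0+)=0$ for unbounded variation, this coefficient vanishes identically. The paper circumvents this by working with the perturbed ruin time $\tau_r^{\epsilon}$ (renewal at level $a-\epsilon$), dividing by $W^{(u)}(\epsilon)$, and letting $\epsilon\downarrow 0$ with l'H\^opital together with $\overline{W}^{(u)}(\epsilon)/W^{(u)}(\epsilon)\to 0$ and $W^{(u)}(\epsilon)/W^{(u)\prime}(\epsilon)\to 0$; it also needs a separate argument for $z>a$ (where $\tau_a^+=0$ and $W^{(u)}(a-z)=0$). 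Your proposal makes no provision for either, so as written it would at best yield the bounded variation case with $z\le a$. A minor point: your $r=0$ sanity check is off, since under definition (\ref{eq:ruin-time}) one has $\tau_0=0$ rather than $\tau_a^+$. The remaining ingredients you cite (the Avram--Kyprianou--Pistorius exit identity for $Y$, the Kendall kernel $\frac{z}{t}\mathbb{P}\{X_t\in dz\}$, and the normalising ratio $\Omega^{(u)}(a-z,r)/\Lambda^{(u)}(a,r)$ arising from solving the renewal equation) are indeed the ones the paper uses, so that part of the outline is sound even though none of the computations are carried out.
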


By inserting $u=0$ in (\ref{eq:main}), we see that in contrary to the Parisian ruin probability under the L\'evy process $X$, see e.g. \cite{Loeffen}, we have the following result.
\begin{coro}
For $y\geq x$ and $r\geq 0$, $\mathbb{P}_{y,x}\{\tau_r<\infty\}=1.$
\end{coro}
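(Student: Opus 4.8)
The plan is to obtain the corollary as a direct specialization of Theorem~\ref{theo:main} by setting $u=0$ in the identity \eqref{eq:main}. When $u=0$ the prefactor $e^{-ur}$ equals $1$, and every term inside the square bracket is multiplied by the factor $u$, so formally the right-hand side collapses to $1$, giving $\mathbb{P}_{y,x}\{\tau_r<\infty\}=1$. The only thing that needs to be checked is that the bracketed expression does not blow up as $u\downarrow 0$, i.e.\ that the limit $u\cdot[\,\cdots\,]\to 0$ genuinely holds; equivalently, that $\overline{W}^{(u)}(a-z)$, $\Omega^{(u)}(a-z,r)/\Lambda^{(u)}(a,r)$, $W^{(u)}(a)$ and the integral $\int_0^r(\cdots)\,dt$ all remain finite as $u\to 0$.

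First I would recall the standard fact that for each fixed $x\geq 0$ the map $u\mapsto W^{(u)}(x)$ is finite, continuous (indeed analytic) on $[0,\infty)$, with $W^{(0)}=W$ the $0$-scale function, and similarly $u\mapsto \overline{W}^{(u)}(x)$ is finite and continuous; this is classical for spectrally negative L\'evy processes (see Kyprianou~\cite{Kyprianou}, Chapter~8). Next I would examine $\Omega^{(u)}(x,t)=\int_0^\infty W^{(u)}(z+x)\frac{z}{t}\mathbb{P}\{X_t\in dz\}$ and $\Lambda^{(u)}(x,t)=\int_0^\infty W^{(u)\prime}(z+x)\frac{z}{t}\mathbb{P}\{X_t\in dz\}$: these are finite for $t>0$ by the arguments already used to state the theorem (in particular $\mathbb{E}|X_t|<\infty$ is not automatic, but the integrals are the ones appearing in the cited drawdown fluctuation identities, hence finite under the paper's standing hypotheses), and since $W^{(u)}$ and $W^{(u)\prime}$ depend continuously on $u$ and are monotone in $u$, dominated convergence gives $\Omega^{(u)}(x,t)\to\Omega^{(0)}(x,t)$ and $\Lambda^{(u)}(x,t)\to\Lambda^{(0)}(x,t)$ as $u\downarrow 0$. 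The ratio $\Omega^{(u)}(a-z,r)/\Lambda^{(u)}(a,r)$ therefore converges provided $\Lambda^{(0)}(a,r)\neq 0$; since $W^{(0)\prime}>0$ on $(0,\infty)$ (strict positivity of the scale-function derivative away from the origin) and $\mathbb{P}\{X_r\in dz\}$ puts positive mass on $(0,\infty)$, the denominator $\Lambda^{(0)}(a,r)$ is strictly positive, so the ratio stays bounded.

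With all four ingredients bounded as $u\downarrow 0$, the product of $u$ with the bracket tends to $0$, and the $t$-integral term, being a continuous function of $u$ on $[0,r]\times[0,u_0]$ and hence bounded, also contributes $0$ after multiplication by $u$. Taking $u\downarrow 0$ in \eqref{eq:main} thus yields $\mathbb{E}_{y,x}\{\mathbf{1}_{\{\tau_r<\infty\}}\}=1$, which is the assertion. The main (only) obstacle is the bookkeeping needed to justify passing the limit under the integral signs defining $\Omega^{(u)}$ and $\Lambda^{(u)}$ and to confirm $\Lambda^{(0)}(a,r)>0$; both are routine given the regularity of scale functions and the fact that $X$ is not a subordinator (so $X_r$ is genuinely spread out on the positive half-line for $r>0$). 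One should also note the trivial boundary behaviour: if $z=y-x\geq a$ then $a-z\leq 0$, $W^{(u)}$ and $\overline{W}^{(u)}$ vanish there, and likewise $\Omega^{(u)}(a-z,\cdot)=0$ since $W^{(u)}$ is supported on $[0,\infty)$, so the bracket is identically zero and the identity reads $\mathbb{E}_{y,x}\{e^{-u\tau_r}\mathbf 1_{\{\tau_r<\infty\}}\}=e^{-ur}$, which again gives probability one at $u=0$ — consistent, since in that regime $Y_0=z\geq a$ and $\tau_r=r$ almost surely.
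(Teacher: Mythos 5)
Your argument is correct and is essentially the paper's own: the corollary is obtained simply by inserting $u=0$ into (\ref{eq:main}), and the only substantive point to verify is the one you address, namely that the bracketed expression remains finite (in particular that $\Lambda^{(0)}(a,r)>0$, which holds since $W^{(0)\prime}>0$ on $(0,\infty)$ and $\mathbb{P}\{X_r\in dz\}$ charges $(0,\infty)$). One small inaccuracy in your closing aside: for $z>a$ the term $\Omega^{(u)}(a-z,\cdot)$ does \emph{not} vanish, because the integrand $W^{(u)}(w+a-z)$ is positive for $w>z-a$, and $\tau_r=r$ holds only on the event that $Y$ stays above $a$ throughout $[0,r]$ — but since the bracket is still finite in that regime, multiplication by $u=0$ kills it anyway and the conclusion is unaffected.
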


Following the result of Theorem \ref{theo:main} and applying Esscher transform of measure, the joint law of ruin-time $\tau_r$ and the position at ruin $X_{\tau_r}$ is given below.

\begin{prop}\label{prop:maincor}
Define $z=y-x$, with $y\geq x$, and $p=u-\psi(\nu)$, with $u\geq 0$ and $\nu$ such that $\psi(\nu)<\infty$. For $a>0$ and $r\geq 0$, the joint Laplace transform of $\tau_r$ and $X_{\tau_r}$ is given by
 \begin{align}
 \mathbb{E}_{y,x}\big\{e^{-u\tau_r + \nu X_{\tau_r}}\mathbf{1}_{\{\tau_r<\infty\}}\big\}&=e^{-pr}e^{\nu x}\Big\{ 1+ p \Big[\overline{W}_{\nu}^{(p)}(a-z) -\frac{\Omega_{\nu}^{(p)}(a-z,r)}{\Lambda_{\nu}^{(p)}(a,r)}W_{\nu}^{(p)}(a) \nonumber\\
&\hspace{-0.5cm}+ \int_0^r \Big(\Omega_{\nu}^{(p)}(a-z,t)-\frac{\Omega_{\nu}^{(p)}(a-z,r)}{\Lambda_{\nu}^{(p)}(a,r)}\Lambda_{\nu}^{(p)}(a,t)\Big) dt \Big]  \Big\}. \label{eq:main2}
 \end{align}
\end{prop}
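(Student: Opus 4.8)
\emph{Proof plan.} The plan is to derive (\ref{eq:main2}) from Theorem \ref{theo:main} by an Esscher change of measure performed at the stopping time $\tau_r$. First I would reduce to a process started at the origin: writing $\tilde X_t=X_t-x$, the drawdown is $Y_t=\overline{X}_t\vee y-X_t=\overline{\tilde X}_t\vee z-\tilde X_t$, which depends only on $\tilde X$ and on $z=y-x$, so $\tau_r$ in (\ref{eq:ruin-time}) is an $\{\mathcal{F}_t\}$-stopping time that is a functional of $(\tilde X,z)$ alone. With $p=u-\psi(\nu)$ one has the pathwise identity
\begin{equation*}
e^{-u\tau_r+\nu X_{\tau_r}}\mathbf{1}_{\{\tau_r<\infty\}}=e^{\nu x}\,e^{-p\tau_r}\,\mathbf{1}_{\{\tau_r<\infty\}}\cdot e^{\nu \tilde X_{\tau_r}-\psi(\nu)\tau_r},
\end{equation*}
in which the last factor is exactly the Esscher density (\ref{eq:esscher}) for $(\tilde X,\mathbb{P}_{y,x})$ evaluated at $\tau_r$.

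Next I would transfer the expectation to $\mathbb{P}^{\nu}_{y,x}$. Applying the optional stopping theorem to the exponential martingale $M_t=e^{\nu\tilde X_t-\psi(\nu)t}$ at the bounded times $\tau_r\wedge t$ gives $\mathbb{E}^{\nu}_{y,x}[h\,\mathbf{1}_{\{\tau_r\leq t\}}]=\mathbb{E}_{y,x}[h\,M_{\tau_r}\,\mathbf{1}_{\{\tau_r\leq t\}}]$ for every bounded non-negative $\mathcal{F}_{\tau_r}$-measurable $h$; letting $t\uparrow\infty$ by monotone convergence and taking $h=e^{-p\tau_r}$, which is bounded by $1$ since $p\geq 0$, yields
\begin{equation*}
\mathbb{E}_{y,x}\big\{e^{-u\tau_r+\nu X_{\tau_r}}\mathbf{1}_{\{\tau_r<\infty\}}\big\}=e^{\nu x}\,\mathbb{E}^{\nu}_{y,x}\big\{e^{-p\tau_r}\,\mathbf{1}_{\{\tau_r<\infty\}}\big\}.
\end{equation*}
Since $(X,\mathbb{P}^{\nu})$ is again a spectrally negative L\'evy process, now with Laplace exponent $\psi_{\nu}$ of (\ref{eq:newLExp}), the Corollary stated above applies to it and gives $\mathbb{P}^{\nu}_{y,x}\{\tau_r<\infty\}=1$, so the indicator on the right may be dropped.

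Finally I would invoke Theorem \ref{theo:main} for the process $(X,\mathbb{P}^{\nu})$: its $p$-scale function and antiderivative are $W_{\nu}^{(p)}$ and $\overline{W}_{\nu}^{(p)}$ (cf. (\ref{eq:scale2})), and the corresponding auxiliary functions are $\Omega_{\nu}^{(p)}$ and $\Lambda_{\nu}^{(p)}$ as in (\ref{eq:Lambdaqnew}) and (\ref{eq:OmegaCM}), while the relevant datum $z=y-x$ is unchanged; this identifies $\mathbb{E}^{\nu}_{y,x}\{e^{-p\tau_r}\}$ with the right-hand side of (\ref{eq:main}) in which every symbol carries the subscript $\nu$ and $u$ is replaced by $p$. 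Multiplying by $e^{\nu x}$ then produces (\ref{eq:main2}); the remaining parameter values asserted in the statement are recovered by analytic continuation in $u$. I expect the only genuinely delicate point to be the passage from $\tau_r\wedge t$ to $\tau_r$ in the change of measure: one must exclude any leakage of mass onto $\{\tau_r=\infty\}$ under $\mathbb{P}^{\nu}$, which is precisely why the a.s.\ finiteness of $\tau_r$ for the transformed process (the Corollary applied under $\mathbb{P}^{\nu}$) is needed; the rest --- verifying that the scale-function and $\Omega/\Lambda$ notation transforms exactly via (\ref{eq:scale2}) and (\ref{eq:OmegaCM}) and collecting terms --- is routine.
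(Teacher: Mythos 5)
Your proposal is correct and follows essentially the same route as the paper: an Esscher change of measure at $\tau_r$ reducing the joint transform to $e^{\nu x}\,\mathbb{E}^{\nu}_{y,x}\{e^{-p\tau_r}\mathbf{1}_{\{\tau_r<\infty\}}\}$, followed by an application of Theorem \ref{theo:main} under $\mathbb{P}^{\nu}$ with the scale function and $\Omega,\Lambda$ objects replaced via (\ref{eq:scale2}) and (\ref{eq:OmegaCM}). The extra care you take with optional stopping at $\tau_r\wedge t$ and the sign of $p$ is a welcome refinement of a step the paper treats as immediate, but it does not change the argument.
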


\section{Preliminaries}\label{sec:Pre}
Before we prove the main results, we devote this section to some preliminary results required to establish (\ref{eq:main})-(\ref{eq:main2}); in particular, Theorem \ref{theo:main} on the Laplace transform of $\tau_r$. By spatial homogeneity of the sample paths of $X$, we establish Theorem \ref{theo:main} under the measure $\mathbb{P}_{\vert y}$. To begin with, we define for $a>0$ stopping times:
\begin{equation}\label{eq:exit-time}
\tau_a^+=\inf\{t>0: Y_t>a\} \quad \textrm{and} \quad
\tau_a^-=\inf\{t>0: Y_t<a\} \quad \textrm{under \; $\mathbb{P}_{\vert y}$}.
\end{equation}
Due to the absence of positive jumps, we have by the strong Markov
property of $X$ that $\tau_a^-$ can equivalently be rewritten as
$\tau_a^-=\inf\{t>0: Y_t\leq a\}$ and that
\begin{equation}\label{eq:fptabove}
\mathbb{E}_{\vert y}\big\{e^{-\theta \tau_a^-}\big\}=e^{-\Phi(\theta)(y-a)}.
\end{equation}
This is due to the fact that $\tau_a^-<\tau_{\{0\}}$ a.s., with $\tau_{\{0\}}=\inf\{t>0:Y_t=0\}$, and that $\{Y_t,t\leq \tau_{\{0\}},\mathbb{P}_{y,x}\}=\{-X_t,t\leq T_0^+,\mathbb{P}_{-z}\}$, with $z=y-x$, where $T_a^+=\inf\{t>0:X_t\geq a\}$, $a\geq0$. We refer to Avram et al. \cite{Avram} and Mijatovi\'c and Pistorius \cite{Mijatovic}.

In the derivation of the main results (\ref{eq:main})-(\ref{eq:main2}), we will also frequently apply Kendall's identity (see e.g. Corollary VII.3 in \cite{Bertoin}), which relates the distribution $\mathbb{P}\{X_t\in dx\}$ of a spectrally negative
L\'evy process $X$ to the distribution $\mathbb{P}\{T_x^+\in dt\}$ of its first passage time $T_x^+$ above $x>0$ under $\mathbb{P}$. This identity is given
by
\begin{equation}\label{eq:kendall}
t\mathbb{P}\{T_x^+\in dt\}dx=x\mathbb{P}\{X_t\in dx\}dt.
\end{equation}

To establish our main results, we need to recall the following identities.
\begin{lem}\label{lem:identity1}
Define $s=y-x$, with $y\geq x$. For $a>0$, $u\geq 0$ and $\nu$ such that $\psi(\nu)<\infty$, the joint Laplace transform of $\tau_a^+$ and $Y_{\tau_a^+}$ is given by
\begin{align}
\mathbb{E}_{y,x}\big\{e^{-u\tau_a^+ -\nu Y_{\tau_a^+}}\mathbf{1}_{\{\tau_a^+<\infty\}}\big\}&=(\psi(\nu)-u)e^{-\nu s}\int_{a-s}^{\infty} e^{-\nu z} W^{(u)}(z)dz \label{eq:avram}\\
&\hspace{-3cm} + \frac{W^{(u)}(a-s)}{W^{(u)\prime}(a)}\Big[(\psi(\nu)-u)e^{-\nu a} W^{(u)}(a) -\nu (\psi(\nu)-u)\int_a^{\infty} e^{-\nu z} W^{(u)}(z) dz\Big]. \nonumber
\end{align}
\end{lem}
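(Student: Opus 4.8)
The plan is to extract (\ref{eq:avram}) from two classical first-passage identities for $X$ itself together with the behaviour of the reflected (drawdown) process $Y$ at the origin, the latter being what produces the $W^{(u)\prime}(a)$ in the denominator. By the spatial homogeneity of the paths of $X$ it suffices to prove the identity under $\mathbb{P}_{\vert s}$, i.e.\ with $X_0=0$ and $Y_0=s=y-x$. If $s\ge a$ then $\tau_a^+=0$ and $Y_{\tau_a^+}=s$, so the left-hand side is $e^{-\nu s}$, and the right-hand side collapses to the same value on using $\int_0^\infty e^{-\nu z}W^{(u)}(z)\,dz=(\psi(\nu)-u)^{-1}$; this holds for $\nu>\Phi(u)$ and extends to all admissible $\nu$ by analytic continuation, since every tail integral $\int_c^\infty e^{-\nu z}W^{(u)}(z)\,dz$ occurring below equals $(\psi(\nu)-u)^{-1}$ minus a finite integral over $[0,c]$. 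Henceforth assume $0\le s<a$.

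Next I would split $\{\tau_a^+<\infty\}$ according to whether $Y$ reaches level $a$ before or after its first return to $0$, i.e.\ before or after $\tau_{\{0\}}=\inf\{t>0:Y_t=0\}$ (the first time $X$ attains a fresh running maximum). On $\{\tau_a^+<\tau_{\{0\}}\}$ I invoke the path identity $\{Y_t,\,t\le\tau_{\{0\}},\,\mathbb{P}_{\vert s}\}=\{-X_t,\,t\le T_0^+,\,\mathbb{P}_{-s}\}$ recalled in Section \ref{sec:Pre}; translating $(-a,0)$ onto $(0,a)$, this event is a two-sided exit of $X$ from $(0,a)$ through the lower boundary, $\tau_a^+$ becomes $T_0^-$, and $Y_{\tau_a^+}$ becomes $a-X_{T_0^-}\ge a$. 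On the complement $\{\tau_{\{0\}}\le\tau_a^+\}$ the drawdown sits at $0$ at time $\tau_{\{0\}}$ with $X$ at a fresh maximum, so by the strong Markov property of $Y$ this part factors as $\mathbb{E}_{\vert s}\{e^{-u\tau_{\{0\}}}\mathbf{1}_{\{\tau_{\{0\}}\le\tau_a^+\}}\}\cdot v(0)$ with $v(0):=\mathbb{E}_{\vert0}\{e^{-u\tau_a^+-\nu Y_{\tau_a^+}}\mathbf{1}_{\{\tau_a^+<\infty\}}\}$, and the prefactor equals $W^{(u)}(a-s)/W^{(u)}(a)$ by the standard upward two-sided exit formula. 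This gives $v(s)=e^{-\nu a}\,\mathbb{E}_{a-s}\{e^{-uT_0^-+\nu X_{T_0^-}}\mathbf{1}_{\{T_0^-<T_a^+\}}\}+\frac{W^{(u)}(a-s)}{W^{(u)}(a)}\,v(0)$.

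It then remains to evaluate the two building blocks. For the first, the two-sided Gerber--Shiu-type quantity $\mathbb{E}_c\{e^{-uT_0^-+\nu X_{T_0^-}}\mathbf{1}_{\{T_0^-<T_a^+\}}\}$ with $c\in(0,a)$, I would start from the $u$-resolvent of $X$ killed on leaving $(0,a)$, whose density is $W^{(u)}(c)W^{(u)}(a-z)/W^{(u)}(a)-W^{(u)}(c-z)$ on $(0,a)$, apply the compensation formula to the downward jumps of $X$ to describe the overshoot, add the explicit creeping term proportional to $\sigma^2$ when $\sigma>0$, and finally use Kendall's identity (\ref{eq:kendall}) and the Laplace characterisation (\ref{eq:scale}) of $W^{(u)}$ to recast the integrals against $\Pi$ as exactly the integrals $\int e^{-\nu z}W^{(u)}(z)\,dz$ in (\ref{eq:avram}). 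For the second, $v(0)$, I would use It\^o's excursion theory for $Y$ away from $0$ --- equivalently for the excursions of $X$ away from its running supremum, whose excursion measure is characterised by $W^{(u)}$ --- with the local-time normalisation for which the rate of excursions of height exceeding $a$ carries the factor $W^{(u)\prime}(a)/W^{(u)}(a)$; this is precisely what inserts $1/W^{(u)\prime}(a)$ into (\ref{eq:avram}). Equivalently, $v(0)$ can be recovered as a boundary relation from the displayed decomposition by letting the threshold $\varepsilon$ in the strong Markov identity at $\tau_\varepsilon^+$ tend to $0$, or one may simply quote the drawdown first-passage identities of Avram et al.\ and Mijatovi\'c and Pistorius. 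Substituting back and simplifying gives (\ref{eq:avram}), and I would close with the sanity checks $s\uparrow a$, $\nu\to\infty$ and $u=0$.

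The main obstacle is the second building block --- the behaviour of $Y$ at level $0$. The Markov splitting at $\tau_{\{0\}}$ is circular when $s=0$: if $X$ has unbounded variation then $0$ is regular for $\{0\}$ for $Y$, so $\tau_{\{0\}}=0$ and the splitting only returns $v(0)=v(0)$; thus $v(0)$ genuinely has to be supplied either by excursion theory --- with the delicate choice of local-time/excursion-measure normalisation that makes $W^{(u)\prime}$ rather than $W^{(u)}$ appear in the denominator --- or by an external drawdown result. The secondary technicalities are the bookkeeping of the $\sigma^2$ creeping term and of the bounded- versus unbounded-variation cases in the first building block, and the analytic-continuation argument needed to pass from $\nu>\Phi(u)$, where the tail integrals converge, to all $\nu$ with $\psi(\nu)<\infty$.
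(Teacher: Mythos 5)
The paper does not actually prove this lemma: it is dispatched in one line as a citation of Theorem~1 of Avram, Kyprianou and Pistorius \cite{Avram}, combined with the scale-function relations (\ref{eq:scale})--(\ref{eq:scale2}) to pass between the tilted and untilted scale functions. What you have written is therefore not an alternative to the paper's argument but a reconstruction of the proof of the cited theorem itself. As a reconstruction it is structurally sound: the reduction to $\mathbb{P}_{\vert s}$, the trivial case $s\geq a$ handled by analytic continuation of $(\psi(\nu)-u)\int_c^{\infty}e^{-\nu z}W^{(u)}(z)\,dz=1-(\psi(\nu)-u)\int_0^c e^{-\nu z}W^{(u)}(z)\,dz$ (a point worth making, since the displayed integrals only converge for $\nu>\Phi(u)$), the identification of $\tau_{\{0\}}$ with the first new maximum so that the pre-$\tau_{\{0\}}$ piece becomes a two-sided exit of $X$ from $(0,a)$ through the lower boundary with $Y_{\tau_a^+}=a-X_{T_0^-}$, and the factorisation of the post-$\tau_{\{0\}}$ piece as $\bigl(W^{(u)}(a-s)/W^{(u)}(a)\bigr)v(0)$ are all correct, and the Gerber--Shiu building block via the killed resolvent density and the compensation formula is standard (though Kendall's identity plays no real role there; it is the Laplace transform (\ref{eq:scale}) and integration by parts that convert the $\Pi$-integrals into $\int e^{-\nu z}W^{(u)}(z)\,dz$). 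The trade-off is clear: the paper buys brevity by outsourcing the entire content to \cite{Avram}; your route makes visible where each term of (\ref{eq:avram}) comes from, at the cost of redoing a known and nontrivial computation.

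The one genuine gap is the one you yourself flag: the value $v(0)$. Your Markov decomposition determines $v(s)$ only up to the unknown constant $v(0)$, and for unbounded variation the splitting at $\tau_{\{0\}}$ is vacuous at $s=0$. Of the three remedies you list, the last --- ``simply quote the drawdown first-passage identities of Avram et al.\ and Mijatovi\'c and Pistorius'' --- is circular, since $v(0)$ \emph{is} the $s=0$ case of the identity being proved. The excursion-theoretic route does work, but it is not a one-liner: one must fix the local-time normalisation $L_t=\overline{X}_t$, express $v(0)$ as $n\bigl(e^{-u\rho_a-\nu\epsilon(\rho_a)}\mathbf{1}_{\{\overline{\epsilon}>a\}}\bigr)/\bigl(n(\overline{\epsilon}>a,\,\cdot\,)+\text{killing}\bigr)$ via the compensation formula over excursions, and identify the resulting ratio with the $1/W^{(u)\prime}(a)$ term; this is essentially the approximation argument $v(0)=\lim_{\varepsilon\downarrow 0}v(\varepsilon)$ made rigorous, using $\lim_{\varepsilon\downarrow 0}W^{(u)}(\varepsilon)/W^{(u)}(a)\cdot(\cdots)$ together with the asymptotics of $W^{(u)}$ and $W^{(u)\prime}$ at $0+$ in the bounded- and unbounded-variation cases separately. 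Until that step is carried out (or an independent, non-circular reference for $v(0)$ is supplied), the proposal is an accurate proof plan rather than a proof.
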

The identity (\ref{eq:avram}) is due to Theorem 1 in Avram et al. \cite{Avram} taking account of (\ref{eq:scale})-(\ref{eq:scale2}).

\begin{coro}\label{cor:corid1}
Define $s=y-x$, with $y\geq x$. For $a>0$ and $u,\theta\geq 0$,   
\begin{align}
\mathbb{E}_{y,x}\big\{e^{-u\tau_a^+ -\Phi(\theta) Y_{\tau_a^+}}\mathbf{1}_{\{\tau_a^+<\infty\}}\big\}
&=(\theta-u)e^{-\Phi(\theta)s}\int_{a-s}^{\infty}e^{-\Phi(\theta) z}W^{(u)}(z) dz  \label{eq:identity1}\\
&\hspace{-4cm}-\frac{W^{(u)}(a-s)}{W^{(u)\prime}(a)}\Big[(u-\theta)e^{-\Phi(\theta)a}W^{(u)}(a)-(u-\theta)\Phi(\theta)\int_a^{\infty}e^{-\Phi(\theta)z}W^{(u)}(z) dz\Big]. \nonumber
\end{align}
\end{coro}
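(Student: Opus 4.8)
The plan is to derive Corollary \ref{cor:corid1} as an immediate specialization of Lemma \ref{lem:identity1}, obtained by setting $\nu=\Phi(\theta)$. First I would verify that this choice of $\nu$ is admissible in the lemma, namely that $\psi(\Phi(\theta))<\infty$. Since $X$ is spectrally negative, $\mathbb{E}\big\{e^{\lambda X_t}\big\}<\infty$ for every $\lambda\geq 0$, so $\psi$ is finite on $[0,\infty)$; because $\Phi(\theta)\in[0,\infty)$ for each $\theta\geq 0$, the hypothesis $\psi(\nu)<\infty$ of Lemma \ref{lem:identity1} holds with $\nu=\Phi(\theta)$, and moreover $\psi(\Phi(\theta))=\theta$ by the very definition of $\Phi$ as the right-continuous inverse of $\psi$.

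Next I would substitute $\nu=\Phi(\theta)$ directly into (\ref{eq:avram}) and use the relation $\psi(\Phi(\theta))=\theta$. This turns every factor $\psi(\nu)-u$ into $\theta-u$ and every $e^{-\nu z}$ into $e^{-\Phi(\theta)z}$. The first summand on the right-hand side of (\ref{eq:avram}) then becomes exactly the first summand of (\ref{eq:identity1}). The bracketed term multiplying $W^{(u)}(a-s)/W^{(u)\prime}(a)$ becomes
\[
(\theta-u)e^{-\Phi(\theta)a}W^{(u)}(a)-\Phi(\theta)(\theta-u)\int_a^{\infty}e^{-\Phi(\theta)z}W^{(u)}(z)\,dz,
\]
which is precisely the negative of $(u-\theta)e^{-\Phi(\theta)a}W^{(u)}(a)-(u-\theta)\Phi(\theta)\int_a^{\infty}e^{-\Phi(\theta)z}W^{(u)}(z)\,dz$. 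Extracting this overall sign turns the $+$ in front of the second term of Lemma \ref{lem:identity1} into the $-$ appearing in (\ref{eq:identity1}), reproducing the claimed identity verbatim.

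Since the argument reduces to a domain check together with a rearrangement of signs, I do not expect any genuine obstacle: Corollary \ref{cor:corid1} is really a notational reformulation of Lemma \ref{lem:identity1}, tailored to the way $\theta$ will enter as a Laplace variable in the proof of Theorem \ref{theo:main}. The only point I would be careful to spell out, for the sake of a self-contained exposition, is exactly the admissibility of $\nu=\Phi(\theta)$ — i.e.\ that $\Phi(\theta)$ always lies in the half-line $[0,\infty)$ on which the Laplace exponent $\psi$, and hence the exponential moment implicit in Lemma \ref{lem:identity1}, is finite.
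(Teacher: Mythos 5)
Your proposal is correct and follows essentially the same route as the paper: the author likewise obtains the corollary by inserting $\nu=\Phi(\theta)$ into (\ref{eq:avram}) and using $\psi(\Phi(\theta))=\theta$, the rest being the sign rearrangement you describe. Your explicit check that $\Phi(\theta)\in[0,\infty)$ lies in the domain where $\psi$ is finite is a reasonable added detail that the paper leaves implicit.
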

\begin{proof}
The result follows from inserting $\nu=\Phi(\theta)$ in eqn. (\ref{eq:avram}) and taking account that $\psi(\Phi(\theta))=\theta$, and $\int_0^x e^{-\nu z} W^{(u)}(z)dz=\frac{1}{(\psi(\nu)-u)}-\int_x^{\infty} e^{-\nu z} W^{(u)}(z) dz. \; \;\exit\\$
\end{proof}

Along with Lemma \ref{lem:identity1} and Corollary \ref{cor:corid1}, the three results below are used when applying inverse Laplace transforms to get the main results (\ref{eq:main})-(\ref{eq:main2}).
\begin{lem}\label{lem:identity2}
For a given $\theta > 0$ and $\alpha$ such that $\alpha < \Phi(\theta)$, we have for $y\in\mathbb{R}$,
\begin{align}
\int_0^{\infty}e^{-\theta t}e^{-\alpha y}\int_y^{\infty}e^{\alpha
z}\frac{z}{t}\mathbb{P}\{X_t\in dz\}dt &=\frac{e^{-\Phi(\theta)y}}{\big(\Phi(\theta)-\alpha\big)}.\label{eq:pers1}\\
\int_0^{\infty}e^{-\theta t}e^{-\alpha y}\int_y^{\infty}e^{\alpha z}\int_0^t \frac{z}{u}\mathbb{P}\{X_u \in dz\}du dt &=\frac{e^{-\Phi(\theta)y}}{\theta\big(\Phi(\theta)-\alpha\big)}.\label{eq:pers2}\\
\int_0^{\infty}W^{(u)}(z)\frac{z}{t}\mathbb{P}\{X_t\in dz\}&=e^{ut}, \quad \textrm{for $u\geq0$ and $t>0$}.\label{eq:pers3}
\end{align}
\end{lem}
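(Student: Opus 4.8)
The plan is to prove the three identities of Lemma~\ref{lem:identity2} by reducing each to a Laplace-transform inversion of Kendall's identity~(\ref{eq:kendall}) together with the defining relation~(\ref{eq:scale}) of the scale function. For~(\ref{eq:pers1}), first interchange the order of integration in $t$ and $z$ (justified by Tonelli, since the integrand is nonnegative) so that the inner integral becomes $\int_0^\infty e^{-\theta t}\frac{z}{t}\mathbb{P}\{X_t\in dz\}\,dt$. By Kendall's identity this equals $\int_0^\infty e^{-\theta t}\mathbb{P}\{T_z^+\in dt\}\,dz$, i.e.\ $\mathbb{E}\{e^{-\theta T_z^+}\}\,dz = e^{-\Phi(\theta)z}\,dz$, using the classical first-passage Laplace transform for spectrally negative L\'evy processes. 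Substituting back, the left side of~(\ref{eq:pers1}) becomes $e^{-\alpha y}\int_y^\infty e^{\alpha z}e^{-\Phi(\theta)z}\,dz = e^{-\alpha y}\int_y^\infty e^{-(\Phi(\theta)-\alpha)z}\,dz$, which, since $\alpha<\Phi(\theta)$ guarantees convergence, evaluates to $e^{-\Phi(\theta)y}/(\Phi(\theta)-\alpha)$.

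For~(\ref{eq:pers2}), I would proceed analogously but keep track of the extra time integral. Writing $F(t,z) := \int_0^t \frac{z}{u}\mathbb{P}\{X_u\in dz\}\,du$, the quantity $\int_0^\infty e^{-\theta t} F(t,z)\,dt$ is the Laplace transform of a time-integral, which equals $\frac{1}{\theta}\int_0^\infty e^{-\theta u}\frac{z}{u}\mathbb{P}\{X_u\in dz\}\,du$ after an integration by parts (or Fubini on the region $0<u<t$). By the computation just done this is $\frac{1}{\theta}e^{-\Phi(\theta)z}\,dz$, and then the same geometric-type integral in $z$ as before yields $e^{-\Phi(\theta)y}/\big(\theta(\Phi(\theta)-\alpha)\big)$. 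The factor $1/\theta$ is exactly the Laplace transform of the Lebesgue measure $du$, which is where it enters.

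For~(\ref{eq:pers3}), the idea is to recognize the left side as a Laplace-transform-in-$t$ evaluated pointwise. Apply Kendall's identity to rewrite $\int_0^\infty W^{(u)}(z)\frac{z}{t}\mathbb{P}\{X_t\in dz\}$; but more directly, take the Laplace transform in $t$ of both sides: $\int_0^\infty e^{-\theta t}\big(\int_0^\infty W^{(u)}(z)\frac{z}{t}\mathbb{P}\{X_t\in dz\}\big)dt = \int_0^\infty W^{(u)}(z)\,\mathbb{E}\{e^{-\theta T_z^+}\}\,dz = \int_0^\infty W^{(u)}(z) e^{-\Phi(\theta)z}\,dz$ by Kendall and the first-passage transform. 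By the definition~(\ref{eq:scale}) of the scale function (valid since $\Phi(\theta)>\Phi(u)$ for $\theta>u$), this equals $\frac{1}{\psi(\Phi(\theta))-u} = \frac{1}{\theta-u} = \int_0^\infty e^{-\theta t} e^{ut}\,dt$. Since the Laplace transforms agree for all $\theta>u$ and both sides are continuous in $t$, uniqueness of the Laplace transform gives the pointwise identity $\int_0^\infty W^{(u)}(z)\frac{z}{t}\mathbb{P}\{X_t\in dz\} = e^{ut}$ for every $t>0$.

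The main obstacle, such as it is, lies in the technical justifications rather than the algebra: verifying that the Fubini/Tonelli interchanges are legitimate (nonnegativity handles this for~(\ref{eq:pers1}) and~(\ref{eq:pers2}), but one should confirm the iterated integrals are finite under $\alpha<\Phi(\theta)$), and being careful that in~(\ref{eq:pers3}) the Laplace-transform argument only pins down the value for a.e.\ $t$ unless one invokes continuity — here $t\mapsto \int_0^\infty W^{(u)}(z)\frac{z}{t}\mathbb{P}\{X_t\in dz\}$ should be shown continuous (or the identity is stated in the a.e.\ sense, which suffices for the subsequent inversion arguments). I would also want to double-check the edge behavior in $z$ near $0$ where $W^{(u)}$ may blow up if $\sigma=0$ and there is no drift, but the $z$-weight in $\frac{z}{t}\mathbb{P}\{X_t\in dz\}$ and the integrability of $W^{(u)}$ against $e^{-\lambda z}$ for large $\lambda$ control this.
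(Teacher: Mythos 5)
Your argument is correct and is exactly the proof the paper has in mind: the paper gives no proof of Lemma~\ref{lem:identity2} beyond the remark that it follows from Kendall's identity~(\ref{eq:kendall}) and Tonelli as in Loeffen et al., and your reduction --- Kendall to convert $\frac{z}{t}\mathbb{P}\{X_t\in dz\}\,dt$ into $\mathbb{P}\{T_z^+\in dt\}\,dz$, the first-passage transform $\mathbb{E}\{e^{-\theta T_z^+}\}=e^{-\Phi(\theta)z}$, the extra $1/\theta$ from the time integral for~(\ref{eq:pers2}), and the defining relation~(\ref{eq:scale}) plus Laplace inversion for~(\ref{eq:pers3}) --- is precisely that route.

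One caveat: your proof (and the nonnegativity justification for Tonelli) is only valid for $y\geq 0$, since Kendall's identity applies to $z>0$ and for $y<0$ the measure $\frac{z}{t}\mathbb{P}\{X_t\in dz\}$ is negative on $(y,0)$; indeed for standard Brownian motion with $\alpha=0$ the left side of~(\ref{eq:pers1}) equals $e^{-\Phi(\theta)|y|}/\Phi(\theta)$, so the identity as stated fails for $y<0$. This is a defect of the lemma's statement (``$y\in\mathbb{R}$'') rather than of your argument --- the paper only ever invokes~(\ref{eq:pers1})--(\ref{eq:pers2}) with nonnegative spatial argument --- but you should restrict to $y\geq 0$ rather than claim nonnegativity of the integrand in general.
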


The results above are slightly generalizations of those given in \cite{Loeffen} and can be proved in similar fashion of \cite{Loeffen} using Kendall's identity (\ref{eq:kendall}) and Tonelli.

\section{Proof of the main results}\label{sec:proofs}

\subsection{Proof of Theorem \ref{theo:main}} 
The proof is established for the case where $X$ has paths of bounded
and unbounded variation. To deal with unbounded variation case, we
will use a limiting argument similar to the one employed in
\cite{Loeffen}, \cite{Loeffen2017} and adjust the ruin time (\ref{eq:ruin-time})
accordingly. For this reason, we introduce for $\epsilon\geq 0$ the
stopping time $\tau_r^{\epsilon}$ defined by
\begin{equation*}
\tau_r^{\epsilon}=\inf\{t>r:\big(t-g_t^{\epsilon}\big)\geq
r\} \; \; \textrm{with} \;\; g_t^{\epsilon}:=\sup\{s<t: Y_s\leq
a-\epsilon \}.
\end{equation*}

This stopping time represents the first time that the L\'evy insurance risk process $X$ has spent a fixed $r>0$ units of time consecutively below pre-specified level $a>0$ from its running maximum $\overline{X}_t\vee y$ ending before $X$ getting back up again to a level $a-\epsilon\geq 0$ below the running maximum. Note that $\tau_r=\tau_r^{0}$. 

By spatial homogeneity of $X$, the proof is given under measure $\mathbb{P}_{\vert y}$ by which $X$ starts at point zero and has current maximum $y$. We have for any $y>a$ that
\begin{equation*}
\begin{split}
\mathbb{E}_{\vert y}\big\{e^{-u\tau_r^{\epsilon}}\mathbf{1}_{\{\tau_r^{\epsilon}<\infty\}}\big\}=
e^{-ur} \mathbb{P}_{\vert y}\{\tau_{a-\epsilon}^->r\} + \mathbb{E}_{\vert y}\big\{e^{-u\tau_r^{\epsilon}}\mathbf{1}_{\{\tau_r^{\epsilon}<\infty, \tau_{a-\epsilon}^- \leq r\}}\big\}.
\end{split}
\end{equation*}
By the strong Markov property of the drawdown process $Y$ (\ref{eq:drawdown}), the second expectation can be worked out using tower property of conditional expectation,
\begin{align*}
\mathbb{E}_{\vert y}\big\{e^{-u\tau_r^{\epsilon}}\mathbf{1}_{\{\tau_r^{\epsilon}<\infty, \tau_{a-\epsilon}^- \leq r\}}\big\}&=\mathbb{E}_{\vert y}\Big\{ \mathbb{E}\big\{e^{-u\tau_r^{\epsilon}}\mathbf{1}_{\{\tau_r^{\epsilon}<\infty, \tau_{a-\epsilon}^- \leq r\}} \big\vert \mathcal{F}_{\tau_{a-\epsilon}^-} \big\}\Big\}\\
&=\mathbb{E}_{\vert y}\Big\{ e^{-u \tau_{a-\epsilon}^-}\mathbf{1}_{\{\tau_{a-\epsilon}^- \leq r\}}
\mathbb{E}_{\vert Y_{\tau_{a-\epsilon}^-}}\big\{e^{-u\tau_r^{\epsilon}}\mathbf{1}_{\{\tau_r^{\epsilon}<\infty\}}\big\}\Big\}\\
&= \mathbb{E}_{\vert y}\big\{ e^{-u \tau_{a-\epsilon}^-}\mathbf{1}_{\{\tau_{a-\epsilon}^- \leq r\}}\big\}
\mathbb{E}_{\vert a-\epsilon}\big\{e^{-u\tau_r^{\epsilon}}\mathbf{1}_{\{\tau_r^{\epsilon}<\infty\}}\big\},
\end{align*}
where the last equality is due to the absence of positive jumps of $X$. Hence,
\begin{equation}\label{eq:mainyneg}
\begin{split}
\mathbb{E}_{\vert y}\big\{e^{-u\tau_r^{\epsilon}}\mathbf{1}_{\{\tau_r^{\epsilon}<\infty\}}\big\}&=
e^{-ur}\big(1- \mathbb{P}_{\vert y}\{\tau_{a-\epsilon}^- \leq r\} \big) \\ & +\mathbb{E}_{\vert y}\big\{ e^{-u \tau_{a-\epsilon}^-}\mathbf{1}_{\{\tau_{a-\epsilon}^- \leq r\}}\big\}
\mathbb{E}_{\vert a-\epsilon}\big\{e^{-u\tau_r^{\epsilon}}\mathbf{1}_{\{\tau_r^{\epsilon}<\infty\}}\big\}.
\end{split}
\end{equation}
Following the above, for $y\leq a$ we have by strong Markov property of $Y$ that 
\begin{align}
\mathbb{E}_{\vert y}\big\{e^{-u\tau_r^{\epsilon}}\mathbf{1}_{\{\tau_r^{\epsilon}<\infty\}}\big\}
&=\mathbb{E}_{\vert y}\Big\{ \mathbb{E}\big\{ e^{-u\tau_r^{\epsilon}}\mathbf{1}_{\{\tau_r^{\epsilon}<\infty\}} \big \vert \mathcal{F}_{\tau_a^+}\big\} \Big\} \nonumber \\
&\hspace{-2.25cm}=\mathbb{E}_{\vert y}\Big\{e^{-u\tau_a^+}\mathbf{1}_{\{\tau_a^+<\infty\}}\Big(e^{-ur}\big(1-\mathbb{P}_{\vert Y_{\tau_a^+}}\{\tau_{a-\epsilon}^- \leq r\}\big)\Big)\Big\}   \nonumber\\
&\hspace{-2cm}+\mathbb{E}_{\vert y}\Big\{e^{-u\tau_a^+}\mathbf{1}_{\{\tau_a^+<\infty\}}\mathbb{E}_{\vert Y_{\tau_a^+}}\big\{e^{-u\tau_{a-\epsilon}^-}\mathbf{1}_{\{\tau_{a-\epsilon}^-\leq r\}}\big\}\Big\}\mathbb{E}_{\vert a-\epsilon}\big\{e^{-u\tau_r^{\epsilon}}\mathbf{1}_{\{\tau_r^{\epsilon}<\infty\}}\big\}  \nonumber \\
&\hspace{-2.25cm}=e^{-ur}\mathbb{E}_{\vert y}\big\{e^{-u\tau_a^+}\mathbf{1}_{\{\tau_a^+<\infty\}}\big\}
-e^{-ur} \mathbb{E}_{\vert y}\Big\{e^{-u\tau_a^+}\mathbf{1}_{\{\tau_a^+<\infty\}}\mathbb{P}_{\vert Y_{\tau_a^+}}\{\tau_{a-\epsilon}^- \leq r\}\Big\} \nonumber\\
&\hspace{-2cm}+\mathbb{E}_{\vert y}\Big\{e^{-u\tau_a^+}\mathbf{1}_{\{\tau_a^+<\infty\}}\mathbb{E}_{\vert Y_{\tau_a^+}}\big\{e^{-u\tau_{a-\epsilon}^-}\mathbf{1}_{\{\tau_{a-\epsilon}^-\leq r\}}\big\}\Big\}\mathbb{E}_{\vert a-\epsilon}\big\{e^{-u\tau_r^{\epsilon}}\mathbf{1}_{\{\tau_r^{\epsilon}<\infty\}}\big\}. 
\label{eq:derivation1}
\end{align}
The first expectation in the last equality of (\ref{eq:derivation1}) can
be worked out in terms of the scale function $W^{(u)}(x)$ using identity (\ref{eq:avram}), whereas the second and the third expectations are given by the following propositions. To establish the results, we denote throughout by $\mathbf{e}_{\theta}$ exponential random time with parameter $\theta$, independent of $X$.

\begin{prop}
For given $u,r,\epsilon \geq 0$ and $a>0$, we have for any $y\geq0$ that
\begin{align}
\mathbb{E}_{\vert y}\Big\{e^{-u\tau_a^+} \mathbf{1}_{\{\tau_a^+<\infty\}} \mathbb{P}_{\vert Y_{\tau_a^+}}\big\{\tau_{a-\epsilon}^- \leq r\big\}\Big\}&=\Omega_{\epsilon}^{(u)}(a-y,r)-u\int_0^r \Omega_{\epsilon}^{(u)}(a-y,t)dt  \nonumber\\
&\hspace{-1.5cm}-\frac{W^{(u)}(a-y)}{W^{(u)\prime}(a)} \Big(\Lambda_{\epsilon}^{(u)}(a,r)-u\int_0^r \Lambda_{\epsilon}^{(u)}(a,t)dt\Big).  \label{eq:prop1}
\end{align}
\end{prop}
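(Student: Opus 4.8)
The plan is to compute the left-hand side of \eqref{eq:prop1} by conditioning on the value $Y_{\tau_a^+}$ at the first passage of $Y$ above level $a$, and then evaluating $\mathbb{P}_{\vert b}\{\tau_{a-\epsilon}^-\le r\}$ for a generic starting point $b>a$. First I would recall from \eqref{eq:fptabove} that for $b>a-\epsilon$ the Laplace transform $\mathbb{E}_{\vert b}\{e^{-\theta\tau_{a-\epsilon}^-}\}=e^{-\Phi(\theta)(b-(a-\epsilon))}$, so that, writing $\tau_{a-\epsilon}^-$ as the first passage time $T_{b-(a-\epsilon)}^+$ of the (reflected-free) L\'evy process $-X$ started at $0$, one has by Kendall's identity \eqref{eq:kendall} that
\begin{equation*}
\mathbb{P}_{\vert b}\{\tau_{a-\epsilon}^-\in dt\}=\frac{b-(a-\epsilon)}{t}\,\mathbb{P}\{X_t\in d(b-(a-\epsilon))\}\bigg/d(b-(a-\epsilon)),
\end{equation*}
i.e. the density at time $t$ is $\frac{w}{t}\mathbb{P}\{X_t\in dw\}/dw$ with $w=b-(a-\epsilon)$. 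Integrating in $t$ over $[0,r]$ gives $\mathbb{P}_{\vert b}\{\tau_{a-\epsilon}^-\le r\}$ as an explicit expression in $b$.

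Next I would substitute $b=Y_{\tau_a^+}=a+(\text{overshoot})$ and take the expectation over $\tau_a^+$ and $Y_{\tau_a^+}$ using Lemma \ref{lem:identity1}. The cleanest route is to notice that $\mathbb{P}_{\vert b}\{\tau_{a-\epsilon}^-\le r\}$ can be written as $\int_0^r(\text{density in }w)\,dt$ evaluated at $w=b-(a-\epsilon)$, and that the resulting double expectation
\begin{equation*}
\mathbb{E}_{\vert y}\Big\{e^{-u\tau_a^+}\mathbf{1}_{\{\tau_a^+<\infty\}}\,h\big(Y_{\tau_a^+}\big)\Big\}
\end{equation*}
for functions $h$ of exponential type $h(b)=e^{-\nu(b-a)}$ is exactly what Lemma \ref{lem:identity1} delivers (after the shift $s\mapsto y$, $\nu\mapsto\nu$, and collecting the $W^{(u)}(a-y)/W^{(u)\prime}(a)$ term). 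Since $\frac{z}{t}\mathbb{P}\{X_t\in dz\}$ is a genuine (sub)probability in $z$, one can represent the $t$-integrated density as a mixture/integral of exponentials in $w$ by writing $\mathbb{P}\{X_t\in dz\}$ directly and applying Fubini; applying \eqref{eq:avram} under the integral sign in $z$ and $t$, and using $\int_0^\infty e^{-\nu w}W^{(u)}(w+\cdot)\,(\cdots)$ to recognize $\Omega_\epsilon^{(u)}$ and $\Lambda_\epsilon^{(u)}$, should collapse everything to the claimed form. The role of the scale-function identity \eqref{eq:pers3}, namely $\int_0^\infty W^{(u)}(z)\frac{z}{t}\mathbb{P}\{X_t\in dz\}=e^{ut}$, will be to handle the pieces where the argument of $W^{(u)}$ has no shift and to convert $u\int_0^r(\cdots)dt$ corrections.

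Concretely, the key steps in order are: (i) express $\mathbb{P}_{\vert b}\{\tau_{a-\epsilon}^-\le r\}$ via Kendall's identity as $\int_0^r \frac{b-a+\epsilon}{t}\mathbb{P}\{X_t\in d(b-a+\epsilon)\}$; (ii) plug $b=Y_{\tau_a^+}$ and swap the (finite, $t\in[0,r]$) integral with the expectation over the first-passage law of $Y$ above $a$; (iii) apply Lemma \ref{lem:identity1} / the overshoot law to evaluate the inner expectation, producing the two structural terms — the ``direct'' term giving $\int_{\epsilon}^\infty W^{(u)}(\cdots)\frac{z}{t}\mathbb{P}\{X_t\in dz\}$ integrated against $t$, and the ``ladder'' term carrying the factor $W^{(u)}(a-y)/W^{(u)\prime}(a)$ and the $W^{(u)\prime}$-weighted integral, i.e. $\Lambda_\epsilon^{(u)}$; (iv) use \eqref{eq:pers3} and simple manipulation $\Omega_\epsilon^{(u)}(x,r)-u\int_0^r\Omega_\epsilon^{(u)}(x,t)dt$ to match the stated right-hand side. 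I expect the main obstacle to be step (iii): correctly tracking the overshoot distribution of $Y$ at $\tau_a^+$ and the compensating $W^{(u)\prime}(a)$ denominator, so that the two terms in Lemma \ref{lem:identity1} align precisely with $\Omega_\epsilon^{(u)}$ and $\Lambda_\epsilon^{(u)}$ after the $z$- and $t$-integrations; a secondary technical point is justifying the Fubini interchange and the differentiation in $x$ that turns $\Omega_\epsilon^{(u)}$ into $\Lambda_\epsilon^{(u)}$, which in the unbounded-variation case needs the $C^1$-regularity of $W^{(u)}$ recalled after \eqref{eq:scale2} and possibly the $\epsilon\downarrow 0$ limiting argument mentioned at the start of the proof of Theorem \ref{theo:main}.
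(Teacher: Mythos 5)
Your overall strategy shares the paper's ingredients (the first-passage identity \eqref{eq:fptabove}, Kendall's identity \eqref{eq:kendall}, and the overshoot law of Lemma \ref{lem:identity1}), but the order in which you deploy them leaves a genuine gap at your step (iii). The function you must integrate against the law of $(\tau_a^+,Y_{\tau_a^+})$ is $h(b)=\mathbb{P}_{\vert b}\{\tau_{a-\epsilon}^-\leq r\}$, which after your step (i) is a $t$-integral of the (possibly non-existent pointwise) density of $\frac{w}{t}\mathbb{P}\{X_t\in dw\}$ at $w=b-a+\epsilon$; it is \emph{not} an exponential in $b$. Lemma \ref{lem:identity1} only evaluates $\mathbb{E}_{\vert y}\{e^{-u\tau_a^+-\nu Y_{\tau_a^+}}\}$, i.e.\ exponential test functions, so ``applying \eqref{eq:avram} under the integral sign in $z$ and $t$'' is not an operation you can actually perform: you would first have to invert \eqref{eq:avram} in $\nu$ to obtain the measure $\mathbb{E}_{\vert y}\{e^{-u\tau_a^+};\,Y_{\tau_a^+}\in db\}$ pointwise (including its possible atom at $b=a$ from creeping and the $\Pi$-driven overshoot density), and none of that is carried out or available in the paper. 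Relatedly, your sketch never explains where the terms $-u\int_0^r\Omega_{\epsilon}^{(u)}(a-y,t)\,dt$ and $-u\int_0^r\Lambda_{\epsilon}^{(u)}(a,t)\,dt$ on the right of \eqref{eq:prop1} come from; identity \eqref{eq:pers3} does not produce them.

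The paper closes exactly this gap by transforming in $r$ \emph{first}: replacing $r$ by an independent exponential time $\mathbf{e}_{\theta}$ turns $h$ into the exponential $\mathbb{P}_{\vert b}\{\tau_{a-\epsilon}^-\leq\mathbf{e}_{\theta}\}=e^{-\Phi(\theta)(b-a+\epsilon)}$ by \eqref{eq:fptabove}, which is precisely the test function Corollary \ref{cor:corid1} handles with $\nu=\Phi(\theta)$. The factor $(\theta-u)/\theta=1-u/\theta$ appearing there is what generates the $-u\int_0^r$ corrections upon Laplace inversion in $\theta$, and Kendall's identity together with \eqref{eq:pers1}--\eqref{eq:pers2} and the integration-by-parts identity \eqref{eq:identity4} converts the resulting $e^{-\Phi(\theta)(\cdot)}$ expressions into $\Omega_{\epsilon}^{(u)}$ and $\Lambda_{\epsilon}^{(u)}$. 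If you want to keep your direct route you must either supply the explicit overshoot measure at $\tau_a^+$ or concede the detour through the $\theta$-transform; as written, step (iii) does not go through.
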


\begin{proof}
On recalling (\ref{eq:fptabove}), we have by Tonelli, Lemma \ref{lem:identity1} and Corollary \ref{cor:corid1},
\begin{equation}\label{eq:proof1}
\begin{split}
&\int_0^{\infty} dr e^{-\theta r} \mathbb{E}_{\vert y}\Big\{ e^{-u\tau_a^+}\mathbf{1}_{\{\tau_a^+<\infty\}}\mathbb{P}_{\vert Y_{\tau_a^+}}\big\{\tau_{a-\epsilon}^-\leq r\big\}\Big\}\\
&\hspace{3.5cm}= \frac{1}{\theta}\mathbb{E}_{\vert y}\Big\{ e^{-u\tau_a^+}\mathbf{1}_{\{\tau_a^+<\infty\}}\mathbb{P}_{\vert Y_{\tau_a^+}}\big\{\mathbf{e}_{\theta}\geq \tau_{a-\epsilon}^-\big\}\Big\}\\
&\hspace{3.5cm}=
\frac{1}{\theta} e^{\Phi(\theta)(a-\epsilon)} \mathbb{E}_{\vert y}\Big\{ e^{-u\tau_a^+ - \Phi(\theta) Y_{\tau_a^+}}\mathbf{1}_{\{\tau_a^+<\infty\}}\Big\}.\\
\end{split}
\end{equation}
Furthermore, observe following the result of Corollary \ref{cor:corid1} that for $\theta>u$ we have 
\begin{equation*}
\begin{split}
\mathbb{E}_{\vert y}\Big\{ e^{-u\tau_a^+ - \Phi(\theta) Y_{\tau_a^+}}\mathbf{1}_{\{\tau_a^+<\infty\}}\Big\}
=&\frac{(\theta-u)}{\Phi(\theta)}e^{-\Phi(\theta)a} W^{(u)}(a-y)\\
&\hspace{-1.15cm}+\frac{(\theta-u)}{\Phi(\theta)}e^{-\Phi(\theta)a}\int_0^{\infty} e^{-\Phi(\theta)z} W^{(u)\prime}(z+a-y)dz\\
&\hspace{-1.15cm}-\frac{W^{(u)}(a-y)}{W^{(u)\prime}(a)}\Big[(\theta-u)e^{-\Phi(\theta)a}\int_0^{\infty} e^{-\Phi(\theta)z} W^{(u)\prime}(z+a)dz\Big].
\end{split}
\end{equation*}
Define $\Gamma(x,r)=\int_x^{\infty}\frac{z}{r}\mathbb{P}\{X_r\in dz\}.$ Following the above, we have from (\ref{eq:proof1}) that
\begin{align*}
&\hspace{-0.35cm}\int_0^{\infty} dr e^{-\theta r} \mathbb{E}_{\vert y}\Big\{ e^{-u\tau_a^+}\mathbf{1}_{\{\tau_a^+<\infty\}}\mathbb{P}_{\vert Y_{\tau_a^+}}\big\{\tau_{a-\epsilon}^-\leq r\big\}\Big\}\\
&\hspace{1.5cm} = \frac{1}{\theta} \mathbb{E}_{\vert y}\Big\{ e^{-u\tau_a^+}\mathbf{1}_{\{\tau_a^+<\infty\}}\mathbb{P}_{\vert Y_{\tau_a^+}}\big\{\mathbf{e}_{\theta} \geq \tau_{a-\epsilon}^-\big\}\Big\} \nonumber \\
&\hspace{2.5cm}=\frac{(\theta-u)}{\theta\Phi(\theta)} e^{-\Phi(\theta)\epsilon}W^{(u)}(a-y) \\
&\hspace{3cm}+ \frac{(\theta-u)}{\theta\Phi(\theta)}e^{-\Phi(\theta)\epsilon}\int_0^{\infty} e^{-\Phi(\theta)z}W^{(u)\prime}(z+a-y)dz  \nonumber\\
&\hspace{3cm}-\frac{W^{(u)}(a-y)}{\theta W^{(u)\prime}(a)}\Big[(\theta-u)e^{-\Phi(\theta)\epsilon}\int_0^{\infty} e^{-\Phi(\theta)z} W^{(u)\prime}(z+a)dz  \Big].
\end{align*}
Next, recall following (\ref{eq:pers1})-(\ref{eq:pers2}), (\ref{eq:fptabove}) and the Kendall's identity (\ref{eq:kendall}) that
\begin{align*}\label{eq:identity3}
\Big(\frac{1}{\Phi(\theta)} - \frac{u}{\theta \Phi(\theta)}\Big)e^{-\Phi(\theta)x}&=
\int_0^{\infty} dr e^{-\theta r} \Big(\Gamma(x,r) -u \int_0^r \Gamma(x,t) dt\Big)\\
\int_0^{\infty} \Big(1-\frac{u}{\theta}\Big)e^{-\Phi(\theta)(z+\epsilon)}W^{(u)\prime}(z+a)dz &=
\int_0^{\infty} dr e^{-\theta r}\Big(\Lambda_{\epsilon}^{(u)}(a,r)-u\int_0^r \Lambda_{\epsilon}^{(u)}(a,t)dt\Big).\nonumber
\end{align*}
Moreover, by applying integration by part we have after some calculations that
\begin{equation}\label{eq:identity4}
\int_0^{\infty} dz W^{(u)\prime}(z+x) \Gamma(z+\epsilon,t)=\Omega_{\epsilon}^{(u)}(x,t)-W^{(u)}(x)\Gamma(\epsilon,t).
\end{equation}
The claim in (\ref{eq:prop1}) is established following the above and by Tonelli and Laplace inversion (noting that both sides of (\ref{eq:proof1}) is right-continuous in $r$) to (\ref{eq:proof1}). \exit
\end{proof}
\begin{prop}
For given $u,r,\epsilon \geq 0$ and $a>0$, we have for any $y\geq0$ that
\begin{equation}
\begin{split}
&\mathbb{E}_{\vert y}\Big\{e^{-u\tau_a^+} \mathbf{1}_{\{\tau_a^+<\infty\}} \mathbb{E}_{\vert Y_{\tau_a^+}}\big\{e^{-u\tau_{a-\epsilon}^-}\mathbf{1}_{\{\tau_{a-\epsilon}^-\leq r\}}\big\}\Big\}\\
&\hspace{3.5cm}=e^{-ur}\Big(\Omega_{\epsilon}^{(u)}(a-y,r)-\frac{W^{(u)}(a-y)}{W^{(u)\prime}(a)}\Lambda_{\epsilon}^{(u)}(a,r)\Big).\label{eq:prop2}
\end{split}
\end{equation}
\end{prop}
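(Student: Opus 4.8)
The plan is to deduce this from the preceding Proposition by a pathwise decomposition of the factor $e^{-u\tau_{a-\epsilon}^-}\mathbf{1}_{\{\tau_{a-\epsilon}^-\leq r\}}$, which reduces the new identity to an elementary integration by parts. In particular I will not need to revisit the fluctuation-theoretic input (Corollary \ref{cor:corid1}, Kendall's identity, the Esscher transform, etc.) at all.

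First, for any nonnegative random time $\tau$ and any $r\geq 0$ one has the deterministic identity
\[
e^{-u\tau}\mathbf{1}_{\{\tau\leq r\}}=e^{-ur}\mathbf{1}_{\{\tau\leq r\}}+\int_0^r u e^{-ut}\mathbf{1}_{\{\tau\leq t\}}\,dt ,
\]
obtained by writing $e^{-u\tau}=e^{-ur}+\int_\tau^r u e^{-ut}\,dt$ on $\{\tau\leq r\}$ and using that $\mathbf{1}_{\{\tau\leq r\}}\mathbf{1}_{\{\tau\leq t\}}=\mathbf{1}_{\{\tau\leq t\}}$ for $0\leq t\leq r$. Applying this with $\tau=\tau_{a-\epsilon}^-$ under $\mathbb{P}_{\vert Y_{\tau_a^+}}$, taking $\mathbb{E}_{\vert Y_{\tau_a^+}}$, then multiplying by $e^{-u\tau_a^+}\mathbf{1}_{\{\tau_a^+<\infty\}}$ and taking $\mathbb{E}_{\vert y}$ — using Tonelli to interchange the $dt$-integral with the expectations — gives
\[
\mathbb{E}_{\vert y}\Big\{e^{-u\tau_a^+}\mathbf{1}_{\{\tau_a^+<\infty\}}\mathbb{E}_{\vert Y_{\tau_a^+}}\big\{e^{-u\tau_{a-\epsilon}^-}\mathbf{1}_{\{\tau_{a-\epsilon}^-\leq r\}}\big\}\Big\}=e^{-ur}A(r)+\int_0^r u e^{-ut}A(t)\,dt ,
\]
where $A(r):=\mathbb{E}_{\vert y}\big\{e^{-u\tau_a^+}\mathbf{1}_{\{\tau_a^+<\infty\}}\mathbb{P}_{\vert Y_{\tau_a^+}}\{\tau_{a-\epsilon}^-\leq r\}\big\}$ is exactly the quantity computed in the preceding Proposition.

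Second, I substitute the closed form (\ref{eq:prop1}), which reads $A(r)=B(r)-u\int_0^r B(t)\,dt$ with $B(r):=\Omega_\epsilon^{(u)}(a-y,r)-\frac{W^{(u)}(a-y)}{W^{(u)\prime}(a)}\Lambda_\epsilon^{(u)}(a,r)$, and conclude by an elementary computation: writing $G(r)=\int_0^r B(t)\,dt$ and integrating $\int_0^r e^{-ut}G(t)\,dt$ by parts ($G'=B$, $G(0)=0$) one obtains
\[
u\int_0^r e^{-ut}A(t)\,dt=u\int_0^r e^{-ut}B(t)\,dt-u^2\int_0^r e^{-ut}G(t)\,dt=u e^{-ur}G(r),
\]
hence $e^{-ur}A(r)+\int_0^r u e^{-ut}A(t)\,dt=e^{-ur}B(r)$, which is precisely the asserted formula. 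A direct alternative would mimic the proof of the preceding Proposition: take Laplace transforms in $r$, use $\int_0^\infty e^{-\theta r}e^{-u\tau_{a-\epsilon}^-}\mathbf{1}_{\{\tau_{a-\epsilon}^-\leq r\}}\,dr=\tfrac1\theta e^{-(\theta+u)\tau_{a-\epsilon}^-}$ together with (\ref{eq:fptabove}) at parameter $\theta+u$, then evaluate $\mathbb{E}_{\vert y}\{e^{-u\tau_a^+-\Phi(\theta+u)Y_{\tau_a^+}}\mathbf{1}_{\{\tau_a^+<\infty\}}\}$ via Corollary \ref{cor:corid1} with $\theta$ replaced by $\theta+u$ (noting $\psi(\Phi(\theta+u))=\theta+u$); the prefactor $e^{-ur}$ then emerges as the corresponding shift of the transform variable, and one finishes by Laplace inversion using that both sides are right-continuous in $r$.

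Since essentially all the analytic work resides in the preceding Proposition, I do not expect a genuine obstacle; the only points needing care are the Tonelli interchanges and the bookkeeping in the integration by parts (or, on the alternative route, the boundary terms and the justification of Laplace inversion).
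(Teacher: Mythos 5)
Your argument is correct, but it is a genuinely different (and more economical) route than the paper's. The paper proves (\ref{eq:prop2}) from scratch by the same machinery as (\ref{eq:prop1}): it takes a Laplace transform in $r$, uses the memorylessness of $\mathbf{e}_\theta$ together with (\ref{eq:fptabove}) at parameter $\theta+u$ to reduce the inner expectation to $\tfrac{1}{\theta}e^{\Phi(\theta+u)(a-\epsilon)}\mathbb{E}_{\vert y}\{e^{-u\tau_a^+-\Phi(\theta+u)Y_{\tau_a^+}}\mathbf{1}_{\{\tau_a^+<\infty\}}\}$, evaluates that via Corollary \ref{cor:corid1}, and inverts using Kendall's identity --- i.e.\ exactly the ``direct alternative'' you sketch at the end. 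Your primary route instead derives (\ref{eq:prop2}) as a corollary of (\ref{eq:prop1}) via the pathwise identity $e^{-u\tau}\mathbf{1}_{\{\tau\leq r\}}=e^{-ur}\mathbf{1}_{\{\tau\leq r\}}+\int_0^r u e^{-ut}\mathbf{1}_{\{\tau\leq t\}}\,dt$, Tonelli (everything is nonnegative, so the interchange is immediate), and the integration by parts $u\int_0^r e^{-ut}\bigl(B(t)-uG(t)\bigr)dt=ue^{-ur}G(r)$ with $G'=B$, $G(0)=0$; I checked the bookkeeping and $e^{-ur}A(r)+\int_0^r ue^{-ut}A(t)\,dt=e^{-ur}B(r)$ does come out exactly. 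What this buys is that all the fluctuation-theoretic content (Corollary \ref{cor:corid1}, Kendall, Laplace inversion and its right-continuity justification) is invoked only once, in the proof of (\ref{eq:prop1}); the second proposition then becomes pure calculus, and the relation $A=B-uG$ versus $e^{-ur}B$ is made transparent rather than re-emerging from a second transform inversion. The only mild cost is that your proof is conditional on (\ref{eq:prop1}) being stated for all $r\geq 0$ (it is), whereas the paper's two propositions are logically independent of each other.
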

\begin{proof}
On recalling (\ref{eq:fptabove}), we have by Tonelli, Lemma \ref{lem:identity1} and Corollary \ref{cor:corid1},
\begin{equation}\label{eq:proof2}
\begin{split}
&\int_0^{\infty} dr e^{-\theta r} \mathbb{E}_{\vert y}\Big\{e^{-u\tau_a^+}\mathbf{1}_{\{\tau_a^+<\infty\}} 
\mathbb{E}_{\vert Y_{\tau_a^+}}\big\{e^{-u\tau_{a-\epsilon}^-}\mathbf{1}_{\{\tau_{a-\epsilon}^- \leq r\}}\big\}\Big\}\\
&\hspace{2.5cm}= \frac{1}{\theta}\mathbb{E}_{\vert y}\Big\{e^{-u\tau_a^+}\mathbf{1}_{\{\tau_a^+<\infty\}} 
\mathbb{E}_{\vert Y_{\tau_a^+}}\big\{e^{-u\tau_{a-\epsilon}^-}\mathbf{1}_{\{\mathbf{e}_{\theta} \geq \tau_{a-\epsilon}^-\}}\big\}\Big\} \\
&\hspace{2.5cm}=\frac{1}{\theta}e^{\Phi(\theta+u)(a-\epsilon)}\mathbb{E}_{\vert y}\Big\{e^{-u\tau_a^+ -\Phi(\theta +u)Y_{\tau_a^+}}\mathbf{1}_{\{\tau_a^+<\infty\}}\Big\}.
\end{split}
\end{equation}
From Corollary \ref{cor:corid1}, the expectation on the right hand side is given by
\begin{equation*}
\begin{split}
\mathbb{E}_{\vert y}\Big\{e^{-u\tau_a^+ -\Phi(\theta +u)Y_{\tau_a^+}}\mathbf{1}_{\{\tau_a^+<\infty\}}\Big\}
=&\frac{\theta}{\Phi(\theta+u)}e^{-\Phi(\theta+u)a}W^{(u)}(a-y)\\
&\hspace{-2cm}+\frac{\theta}{\Phi(\theta+u)}e^{-\Phi(\theta+u)a}\int_0^{\infty}e^{-\Phi(\theta+u)z}W^{(u)\prime}(z+a-y)dz\\
&\hspace{-2cm}-\frac{W^{(u)}(a-y)}{W^{(u)\prime}(a)} \theta e^{-\Phi(\theta+u)a}\int_0^{\infty}e^{-\Phi(\theta+u)z}W^{(u)\prime}(z+a)dz.
\end{split}
\end{equation*}
Following the above, we have from the Laplace transform (\ref{eq:proof2}) that
\begin{align}\label{eq:proof3}
&\int_0^{\infty} dr e^{-\theta r} \mathbb{E}_{\vert y}\Big\{e^{-u\tau_a^+}\mathbf{1}_{\{\tau_a^+<\infty\}} 
\mathbb{E}_{\vert Y_{\tau_a^+}}\big\{e^{-u\tau_{a-\epsilon}^-}\mathbf{1}_{\{\tau_{a-\epsilon}^- \leq r\}}\big\}\Big\} \nonumber \\
&\hspace{1cm}=\frac{1}{\Phi(\theta+u)}e^{-\Phi(\theta+u)\epsilon}W^{(u)}(a-y) \nonumber\\
&\hspace{2cm}+\frac{1}{\Phi(\theta+u)}e^{-\Phi(\theta+u)\epsilon}\int_0^{\infty}e^{-\Phi(\theta+u)z} W^{(u)\prime}(z+a-y)dz\\
&\hspace{2cm}-\frac{W^{(u)}(a-y)}{W^{(u)\prime}(a)}e^{-\Phi(\theta+u)\epsilon}
\int_0^{\infty}e^{-\Phi(\theta+u)z} W^{(u)\prime}(z+a)dz. \nonumber
\end{align}
Moreover, following (\ref{eq:fptabove}), we have by applying Kendall's identity and (\ref{eq:pers1})
\begin{equation*}
\begin{split}
\frac{1}{\Phi(\theta+u)}e^{-\Phi(\theta+u)x}=&\int_0^{\infty} dr e^{-\theta r} e^{-ur}\Gamma(x,r)\\
\int_0^{\infty} e^{-\Phi(\theta+u)(z+\epsilon)} W^{(u)\prime}(z+a)dz=&\int_0^{\infty} dr e^{-\theta r} e^{-ur} \Lambda_{\epsilon}^{(u)}(a,r).
\end{split}
\end{equation*}
The claim (\ref{eq:prop2}) is justified using the above and (\ref{eq:identity4}) and by Tonelli and Laplace inversion of (\ref{eq:proof3}) - noting that both sides of (\ref{eq:proof3}) is right-continuous in $r$. \exit\\
\end{proof}

From the above two propositions, we have following (\ref{eq:derivation1}) and (\ref{eq:avram}) that
\begin{align}
\mathbb{E}_{\vert y}\big\{e^{-u\tau_r^{\epsilon}}\mathbf{1}_{\{\tau_r^{\epsilon}<\infty\}}\big\}&=e^{-ur}\Big[1+u\overline{W}^{(u)}(a-y) - u \frac{W^{(u)}(a)}{W^{(u)\prime}(a)}W^{(u)}(a-y)\Big] \nonumber\\
&-e^{-ur}\Big[\Omega_{\epsilon}^{(u)}(a-y,r)-u\int_0^r \Omega_{\epsilon}^{(u)}(a-y,t) dt  \label{eq:derivation6}\\
&-\frac{W^{(u)}(a-y)}{W^{(u)\prime}(a)}\Big(\Lambda_{\epsilon}^{(u)}(a,r) - u \int_0^r \Lambda_{\epsilon}^{(u)}(a,t) dt\Big)\Big]  \nonumber\\
&\hspace{-2cm}+ e^{-ur}\Big(\Omega_{\epsilon}^{(u)}(a-y,r) - \frac{W^{(u)}(a-y)}{W^{(u)\prime}(a)}\Lambda_{\epsilon}^{(u)}(a,r)\Big)\mathbb{E}_{\vert a-\epsilon}\big\{e^{-u\tau_r^{\epsilon}}\mathbf{1}_{\{\tau_r^{\epsilon}<\infty\}}\big\}.  \nonumber
\end{align}

We arrive at our claim (\ref{eq:main}) once the expectation on the right hand side is found. For this purpose, set $y=a-\epsilon$ on both sides of the above equation to get
\begin{align}
&\Big[1-e^{-ur}\Big(\Omega_{\epsilon}^{(u)}(\epsilon,r) - \frac{W^{(u)}(\epsilon)}{W^{(u)\prime}(a)}\Lambda_{\epsilon}^{(u)}(a,r) \Big)\Big]\mathbb{E}_{\vert a-\epsilon}\big\{e^{-u\tau_r^{\epsilon}}\mathbf{1}_{\{\tau_r^{\epsilon}<\infty\}}\big\}   \nonumber\\
&\hspace{0.5cm}=e^{-ur}\Big[1+u\overline{W}^{(u)}(\epsilon) - u \frac{W^{(u)}(a)}{W^{(u)\prime}(a)}W^{(u)}(\epsilon) - \Omega_{\epsilon}^{(u)}(\epsilon,r) + u\int_0^r \Omega_{\epsilon}^{(u)}(\epsilon,t)dt   \nonumber\\
&\hspace{4cm}+\frac{W^{(u)}(\epsilon)}{W^{(u)\prime}(a)}\Big(\Lambda_{\epsilon}^{(u)}(a,r) - u \int_0^r \Lambda_{\epsilon}^{(u)}(a,t) dt\Big)\Big].  \label{eq:derivation5}
\end{align}
However, on account of (\ref{eq:pers3}), we can rewrite the terms $\Omega_{\epsilon}^{(u)}(\epsilon,t)$ as follows
\begin{equation*}
 \Omega_{\epsilon}^{(u)}(\epsilon,t) = e^{ut} - \int_0^{\epsilon} W^{(u)}(z)\frac{z}{t}\mathbb{P}\{X_t\in dz\},
\end{equation*}
from which the equation (\ref{eq:derivation5}) simplifies further after some calculations to
\begin{equation*}
\begin{split}
&\Big[\int_0^{\epsilon} W^{(u)}(z)\frac{z}{r}\mathbb{P}\{X_r\in dz\} + \frac{W^{(u)}(\epsilon)}{W^{(u)\prime}(a)}\Lambda_{\epsilon}^{(u)}(a,r)\Big]\mathbb{E}_{\vert a-\epsilon}\big\{e^{-u\tau_r^{\epsilon}}\mathbf{1}_{\{\tau_r^{\epsilon}<\infty\}}\big\}\\
&\hspace{0cm}=\int_0^{\epsilon} W^{(u)}(z)\frac{z}{r}\mathbb{P}\{X_r\in dz\} + \frac{W^{(u)}(\epsilon)}{W^{(u)\prime}(a)}\Lambda_{\epsilon}^{(u)}(a,r)+u\Big[\overline{W}^{(u)}(\epsilon) - \frac{W^{(u)}(a)}{W^{(u)\prime}(a)}W^{(u)}(\epsilon) \\
&\hspace{3cm}-\int_0^rdt \Big(\int_0^{\epsilon}W^{(u)}(z)\frac{z}{t}\mathbb{P}\{X_t\in dz\}+\frac{W^{(u)}(\epsilon)}{W^{(u)\prime}(a)}\Lambda_{\epsilon}^{(u)}(a,t)\Big)\Big],
\end{split}
\end{equation*}
or equivalently, we obtain after dividing both sides of the equation by $ W^{(u)}(\epsilon)$,
\begin{equation*}
\begin{split}
&\mathbb{E}_{\vert a-\epsilon}\big\{e^{-u\tau_r^{\epsilon}}\mathbf{1}_{\{\tau_r^{\epsilon}<\infty\}}\big\}\\
=&
1+u\frac{\Big[\frac{\overline{W}^{(u)}(\epsilon)}{W^{(u)}(\epsilon)} - \frac{W^{(u)}(a)}{W^{(u)\prime}(a)} - \int_0^r dt \Big(\int_0^{\epsilon} \frac{W^{(u)}(z)}{ W^{(u)}(\epsilon)}\frac{z}{t}\mathbb{P}\{X_t\in dz\} + \frac{\Lambda_{\epsilon}^{(u)}(a,t)}{W^{(u)\prime}(a)}\Big)\Big]}{\Big(\int_0^{\epsilon} \frac{W^{(u)}(z)}{ W^{(u)}(\epsilon)}\frac{z}{r}\mathbb{P}\{X_r\in dz\} + \frac{\Lambda_{\epsilon}^{(u)}(a,r)}{W^{(u)\prime}(a)}\Big)}.
\end{split}
\end{equation*}

Using this result and putting it back in the equation (\ref{eq:derivation6}), we arrive at
\begin{align}\label{eq:proof4}
e^{ur}\mathbb{E}_{\vert y}\big\{e^{-u\tau_r^{\epsilon}}\mathbf{1}_{\{\tau_r^{\epsilon}<\infty\}}\big\}&=1+u\overline{W}^{(u)}(a-y) - u \frac{W^{(u)}(a)}{W^{(u)\prime}(a)}W^{(u)}(a-y)  \nonumber\\
&+u\int_0^r dt \Big(\Omega_{\epsilon}^{(u)}(a-y,t)-\frac{W^{(u)}(a-y)}{W^{(u)\prime}(a)}\Lambda_{\epsilon}^{(u)}(a,t)\Big)  \nonumber \\
&\hspace{-2cm}+u\frac{\Big[\frac{\overline{W}^{(u)}(\epsilon)}{W^{(u)}(\epsilon)} - \frac{W^{(u)}(a)}{W^{(u)\prime}(a)} - \int_0^r dt \Big(\int_0^{\epsilon} \frac{W^{(u)}(z)}{ W^{(u)}(\epsilon)}\frac{z}{t}\mathbb{P}\{X_t\in dz\} + \frac{\Lambda_{\epsilon}^{(u)}(a,t)}{W^{(u)\prime}(a)}\Big)\Big]}{\Big(\int_0^{\epsilon} \frac{W^{(u)}(z)}{ W^{(u)}(\epsilon)}\frac{z}{r}\mathbb{P}\{X_r\in dz\} + \frac{\Lambda_{\epsilon}^{(u)}(a,r)}{W^{(u)\prime}(a)}\Big)}  \nonumber\\
&\times \Big(\Omega_{\epsilon}^{(u)}(a-y,r) - \frac{W^{(u)}(a-y)}{W^{(u)\prime}(a)}\Lambda_{\epsilon}^{(u)}(a,r)\Big).
\end{align}
We now want to compute the limit as $\epsilon\downarrow 0$ of (\ref{eq:proof4}). In order to do this, recall by the spatial homogeneity that $\mathbb{P}_{\vert y}\{\tau_r^{\epsilon}\leq t\}=\mathbb{P}_{\vert y+\epsilon}\{\tau_r\leq t\}$ and therefore by the right-continuity of the map $y\rightarrow \mathbb{P}_{\vert y}\{\tau_r\leq t\}$, we have $\mathbb{P}_{\vert y}\{\tau_r\leq t\}=\lim_{\epsilon\downarrow 0}\mathbb{P}_{\vert y}\{\tau_r^{\epsilon}\leq t\}$. Hence, by weak convergence theorem the Laplace transform of $\mathbb{P}_{\vert y}\{\tau_r^{\epsilon}\leq t\}$ converges as $\epsilon\downarrow 0$ to that of $\mathbb{P}_{\vert y}\{\tau_r\leq t\}$, i.e., $\lim_{\epsilon\downarrow 0}\mathbb{E}_{\vert y}\big\{e^{-u\tau_r^{\epsilon}}\mathbf{1}_{\{\tau_r^{\epsilon}<\infty\}}\big\}=\mathbb{E}_{\vert y}\big\{e^{-u\tau_r}\mathbf{1}_{\{\tau_r<\infty\}}\big\}$. 

We consider two cases: $W^{(u)}(0+)>0$ ($X$ has paths of bounded variation) and $W^{(u)}(0+)=0$ ($X$ has unbounded variation). For the case $W^{(u)}(0+)>0$,
\begin{align}
e^{ur}\mathbb{E}_{\vert y}\big\{e^{-u\tau_r}\mathbf{1}_{\{\tau_r<\infty\}}\big\}&=1+u\overline{W}^{(u)}(a-y) - u \frac{W^{(u)}(a)}{W^{(u)\prime}(a)}W^{(u)}(a-y)  \nonumber\\
&+u\int_0^r dt \Big(\Omega^{(u)}(a-y,t)-\frac{W^{(u)}(a-y)}{W^{(u)\prime}(a)}\Lambda^{(u)}(a,t)\Big)  \label{eq:limit1} \\
&\hspace{-3.25cm} -u\Big(\frac{W^{(u)}(a)}{\Lambda^{(u)}(a,r)} + \int_0^r \frac{\Lambda^{(u)}(a,t)}{\Lambda^{(u)}(a,r)} dt \Big)  \Big(\Omega^{(u)}(a-y,r) - \frac{W^{(u)}(a-y)}{W^{(u)\prime}(a)}\Lambda^{(u)}(a,r)\Big), \nonumber
\end{align}
which after some further calculations simplifies to the main result (\ref{eq:main}). 

For the case $W^{(u)}(0+)=0$, we have after applying integration by parts that
\begin{equation*}
\int_0^{\epsilon} \frac{W^{(u)}(z)}{W^{(u)}(\epsilon)} \frac{z}{r} \mathbb{P}\{X_r\in dz\}
=\int_0^{\epsilon} \frac{z}{r}\mathbb{P}\{X_r\in dz\}  - \int_0^{\epsilon} dz \frac{W^{(u)\prime}(z)}{W^{(u)}(\epsilon)}\int_0^z \frac{w}{r} \mathbb{P}\{X_r \in dw\}.
\end{equation*}
Therefore, by employing l'H\^opital rule we obtain
\begin{equation*}
\begin{split}
\lim_{\epsilon\downarrow 0} \int_0^{\epsilon} \frac{W^{(u)}(z)}{W^{(u)}(\epsilon)} \frac{z}{r} \mathbb{P}\{X_r\in dz\}=
\lim_{\epsilon\downarrow 0} \frac{W^{(u)\prime}(\epsilon)\int_0^{\epsilon}\frac{z}{r}\mathbb{P}\{X_r\in dz\}}{W^{(u)\prime}(\epsilon)}=0.
\end{split}
\end{equation*}
The claim is established once we show that $\lim_{\epsilon\downarrow 0} \frac{\overline{W}^{(u)}(\epsilon)}{W^{(u)}(\epsilon)} =\lim_{\epsilon\downarrow 0}\frac{W^{(u)}(\epsilon)}{W^{(u)\prime}(\epsilon)}=0$. This turns out to be the case when $X$ has paths of unbounded variation since $W^{(u)\prime}(0+)=2/\sigma^2$ if $\sigma\neq 0$ and is equal to $\infty$ if $\sigma=0$. See for instance Lemma 4.4. in Kyprianou and Surya \cite{Kyprianou2007}. On account of these results, we arrive at the identity (\ref{eq:limit1}), which after some further calculations simplifies to the main result (\ref{eq:main}). \exit

We have shown that (\ref{eq:main}) holds for $z\leq a$. We now prove that (\ref{eq:main}) holds for $z>a$. For this purpose, recall that under measure $\mathbb{P}_{\vert y}$, with $y>a$, $\tau_a^+=0$ a.s. On account of the fact $W^{(u)}(x)=0$ for $x<0$, we have from (\ref{eq:prop1}) and (\ref{eq:prop2}) for $\epsilon=0$ and $y>a$,
\begin{align}
\mathbb{P}_{\vert y}\{\tau_{a}^-\leq r\}&=\Omega^{(u)}(a-y,r)-u\int_0^r \Omega^{(u)}(a-y,t)dt.   \label{eq:ID1}\\
\mathbb{E}_{\vert y}\big\{e^{-u\tau_{a}^-}\mathbf{1}_{\{\tau_{a}^-\leq r\}}\big\}&=e^{-ur} \Omega^{(u)}(a-y,r). \label{eq:ID2}
\end{align}
These identities can be proved by Kendall's identity, Tonelli, (\ref{eq:fptabove}) and Laplace inversion taking account for $x<0$, $\int_0^{\infty} e^{-\theta t} \Omega^{(u)}(x,t)dt=e^{\Phi(\theta)x}/(\theta-u)$, $\theta>u$. Indeed,
\begin{align*}
\int_0^{\infty} e^{-\theta t} \Omega^{(u)}(x,t)dt &= \int_0^{\infty} e^{-\theta t} \int_0^{\infty} W^{(u)}(z+x)\frac{z}{t}\mathbb{P}\{X_t\in dz\}dt\\
&=\int_0^{\infty} e^{-\theta t} \int_0^{\infty} W^{(u)}(z+x)\mathbb{P}\{T_z^+\in dt\} dz\\
&= \int_0^{\infty} W^{(u)}(z+x) \int_0^{\infty} e^{-\theta t} \mathbb{P}\{T_z^+\in dt\} dz\\
&=\int_0^{\infty} W^{(u)}(z+x) e^{-\Phi(\theta)z} dz\\
&=e^{\Phi(\theta) x} \int_x^{\infty} e^{-\Phi(\theta)z} W^{(u)}(z) dz. \quad \exit
\end{align*}

Starting from eqn. (\ref{eq:mainyneg}) with $\epsilon=0$, we obtain following identities (\ref{eq:ID1})-(\ref{eq:ID2}),
\begin{equation}\label{eq:maina0}
\begin{split}
\mathbb{E}_{\vert y}\big\{e^{-u\tau_r}\mathbf{1}_{\{\tau_r<\infty\}}\big\}&=e^{-ur}\Big[1+u\int_0^r \Omega^{(u)}(a-y,t)dt -\Omega^{(u)}(a-y,r)   \Big] \\
&+ e^{-ur} \Omega^{(u)}(a-y,r) \mathbb{E}_{\vert a}\big\{e^{-u\tau_r}\mathbf{1}_{\{\tau_r<\infty\}}\big\}.
\end{split}
\end{equation}
The expression for $\mathbb{E}_{\vert a}\big\{e^{-u\tau_r}\mathbf{1}_{\{\tau_r<\infty\}}\big\}$ is given by setting $z=a$ in (\ref{eq:main}):
\begin{equation}\label{eq:maina}
\mathbb{E}_{\vert a}\big\{e^{-u\tau_r}\mathbf{1}_{\{\tau_r<\infty\}}\big\}=1-u\Big(\frac{W^{(u)}(a)}{\Lambda^{(u)}(a,r)}+\int_0^r \frac{\Lambda^{(u)}(a,t)}{\Lambda^{(u)}(a,r)}dt\Big),
\end{equation}
 where we have used in the calculation above the fact that $\Omega^{(u)}(0,t)=e^{ut}$, see eqn. (\ref{eq:pers3}). By inserting (\ref{eq:maina}) in (\ref{eq:maina0}) we obtain after some further calculations that
 \begin{align*}
 \mathbb{E}_{\vert y}\big\{e^{-u\tau_r}\mathbf{1}_{\{\tau_r<\infty\}}\big\}&= e^{-ur}\Big\{1+u\Big[-\frac{\Omega^{(u)}(a-y,r)}{\Lambda^{(u)}(a,r)}W^{(u)}(a) \\
 &\hspace{2cm}+\int_0^r \Big(\Omega^{(u)}(a-y,t)-\frac{\Omega^{(u)}(a-y,r)}{\Lambda^{(u)}(a,r)}\Lambda^{(u)}(a,t)\Big)dt\Big]\Big\},
 \end{align*}
 which corresponds to (\ref{eq:main}) for $z>a$ showing that (\ref{eq:main}) holds for any $z\geq 0$. \exit

\subsection{Proof of Proposition \ref{prop:maincor}}
Applying Esscher transform of measure (\ref{eq:esscher}) to the result (\ref{eq:main}), we have 
\begin{align}
\mathbb{E}_{y,x}\big\{e^{-u\tau_r + \nu X_{\tau_r}} \mathbf{1}_{\{\tau_r<\infty\}}\big\}=&
e^{\nu x}\mathbb{E}_{y,x}\big\{e^{-p\tau_r} e^{\nu(X_{\tau_r}-x)-\psi(\nu)\tau_r} \mathbf{1}_{\{\tau_r<\infty\}}\big\}  \nonumber\\
=&e^{\nu x}\mathbb{E}_{y,x}^{\nu}\big\{e^{-p\tau_r}  \mathbf{1}_{\{\tau_r<\infty\}}\big\}, \label{eq:hasil2}
\end{align}
where we have defined $p=u-\psi(\nu)$. Under the new measure $\mathbb{P}^{\nu}$,
 \begin{align*}
 \mathbb{E}_{y,x}^{\nu}\big\{e^{-p\tau_r}\mathbf{1}_{\{\tau_r<\infty\}}\big\}&=e^{-pr}\Big\{ 1+ p \Big[\overline{W}_{\nu}^{(p)}(a+x-y) -\frac{\Omega_{\nu}^{(p)}(a+x-y,r)}{\Lambda_{\nu}^{(p)}(a,r)}W_{\nu}^{(p)}(a) \nonumber\\
&\hspace{-2cm}+ \int_0^r \Big(\Omega_{\nu}^{(p)}(a+x-y,t)-\frac{\Omega_{\nu}^{(p)}(a+x-y,r)}{\Lambda_{\nu}^{(p)}(a,r)}\Lambda_{\nu}^{(p)}(a,t)\Big) dt \Big]  \Big\},
 \end{align*}
following which and the equation (\ref{eq:hasil2}) our claim in (\ref{eq:main2}) is established. \exit 

\section{Conclusions}\label{sec:conclusions}
We have presented some new results concerning Parisian ruin problem under L\'evy insurance risk process, where ruin is announced when the risk process has gone below a certain level from the last record maximum of the process, also known as the drawdown, for a fixed consecutive period of time. They further extend the existing results on Parisian ruin below a fixed level of the risk process. Using recent developments on fluctuation and excursion theory of the drawdown of the L\'evy risk process,  the law of ruin-time and the position at ruin was given in terms of their joint Laplace transforms. Identities are presented semi-explicitly in terms of the scale function and the law of the L\'evy process. The results can be used to calculate some quantities of interest in finance and insurance as discussed in the introduction.

\section*{Acknowledgement}
The author would like to thank a number of anonymous referees and associate editors for their useful suggestions and comments that improved the presentation of this paper. This paper was completed during the time the author visited the Hugo Steinhaus Center of Mathematics at Wroc\l aw University of Science and Technology in Poland. The author acknowledges the support and hospitality provided by the Center. He thanks to Professor Zbigniew Palmowski for the invitation, and for some suggestions over the work discussed during the MATRIX Mathematics of Risk Workshop in Melbourne organized by Professors Konstantin Borovkov, Alexander Novikov and Kais Hamza to whom the author also like to thanks for the invitation. This research is financially supported by Victoria University PBRF Research Grants \# 212885 and \# 214168 for which the author is grateful.


\begin{thebibliography}{99.}%
\bibitem{Agarwal} Agarwal, V., Daniel, N., Naik, N.: Role of managerial incentives and discretion in hedge fund performance. J. Finance \textbf{64}, 2221-2256 (2009)

\bibitem{Avram} Avram, F., Kyprianou, A.E., Pistorius, M.R.: Exit problems for spectrally negative L\'evy processes and applications to (Canadized) Russian Options. Ann. Appl. Probab. \textbf{14}, 215-238 (2004)

\bibitem{Bertoin} Bertoin, J.: L\'evy Processes. Cambridge University Press, Cambridge (1996) 

\bibitem{Broadie} Broadie, M., Chernov, M., Sundaresan, S.: Optimal debt and equity values in the presence of Chapter 7
and Chapter 11. J. Finance \textbf{LXII}, 1341-1377 (2007)

\bibitem{Chan} Chan, T., Kyprianou, A.E. and Savov, M.: Smoothness of scale functions for spectrally negative L\'evy processes. Probab. Theory Rel. \textbf{150}, 691-708 (2011)

\bibitem{Chesney} Chesney, M., Jeanblanc-Picqu\'e, M., Yor, M.: Brownian excursions
and Parisian barrier options. Adv. Appl. Probab. \textbf{29}, 165-184 (1997)

\bibitem{Czarna} Czarna, I., Palmowski, Z.: Ruin probability with Parisian delay for a spectrally negative L\'evy process. J. Appl. Probab. \textbf{48}, 984-1002 (2011)

\bibitem{Dassios2010} Dassios, A., Wu, S.: Perturbed Brownian motion and its application to
Parisian option pricing. Finance Stoch. \textbf{14}, 473-494 (2010)

\bibitem{Francois} Francois, P., Morellec, E.: Capital structure and asset prices: Some effects of bankruptcy
procedures. J. Business \textbf{77}, 387-411 (2004)

\bibitem{Goetzmann} Goetzmann, W.N., Ingersoll Jr., J.E., Ross, S.A.: High-water marks and hedge fund management contracts. J. Finance \textbf{58}, 1685–1717 (2003)

\bibitem{Kuznetzov} Kusnetzov, A., Kyprianou, A.E., Rivero, V.: The Theory of Scale Functions for Spectrally
Negative L\'evy Processes, L\'evy Matters II.  Springer Lecture Notes in Mathematics (2013)

\bibitem{Kyprianou2007} Kyprianou, A.E., Surya, B.A.: Principles of smooth and continuous fit in the determination of endogenous bankruptcy levels. Finance Stoch. \textbf{11}, 131-152 (2007)

\bibitem{Kyprianou} Kyprianou, A.E.: Introductory Lectures on Fluctuations of L\'evy Processes with Applications. Springer, Berlin (2006)

\bibitem{Lambert} Lambert, A.: Completely asymmetric L\'evy processes confined in a finite interval. Ann. Inst. Henri Poincar\'e \textbf{2},  251-274 (2000)

\bibitem{Landriault} Landriault, D., Renaud, J-F., Zhou, X.: An insurance risk model with Parisian implementation delays. Methodol. Comput. Appl. Probab. \textbf{16}, 583-607 (2014)

\bibitem{Loeffen2017} Loeffen, R., Palmowski, Z., Surya, B.A.: Discounted penalty function at Parisian ruin for L\'evy insurance risk process. Insur. Math. Econ. (2017)

\bibitem{Loeffen} Loeffen, R., Czarna, I., Palmowski, Z.: Parisian ruin probability for spectrally
negative L\'evy processes. Bernoulli \textbf{19}, 599-609 (2013)

\bibitem{Mijatovic} Mijatovi\'c, A., Pistorius, M.R.: On the drawdown of completely assymetric L\'evy process. Stoc. Proc. Appl. \textbf{122}, 3812-3836 (2012)

\bibitem{Palmowski2018} Palmowski, Z., Tumilewicz, J.: Pricing insurance drawdown-type contracts with underlying L\'evy assets.  Insur. Math. Econ. \textbf{79}, 1-14 (2018)

\bibitem{Surya} Surya, B.A.: Evaluating scale function of spectrally negative L\'evy processes. J. Appl. Probab. \textbf{45}, 135-149 (2008)

\bibitem{Zhang} Zhang, H., Leung, T., Hadjiliadis, O.: Stochastic modeling and fair valuation of drawdown insurance.  Insur. Math. Econ. \textbf{53}, 840-850 (2013)
%
\end{thebibliography}
\end{document}